\newcommand{\Q}{\mathbb{Q}}
\newcommand{\F}{\mathbb{F}}
\newcommand{\Z}{\mathbb{Z}}
\newcommand{\N}{\mathbb{N}}
\newcommand{\R}{\mathbb{R}}
\newcommand{\lra}{\longrightarrow}
\newcommand{\Char}{{\rm char}}
\newcommand{\supp}{\mbox{\rm supp}}
\newcommand{\VR}{\mathcal{O}}
\newcommand{\MI}{\mathfrak{m}}
\newtheorem{theorem}{Theorem}[section]
\newtheorem{prop}[theorem]{Proposition}
\newtheorem{lemma}[theorem]{Lemma}
\newtheorem{corollary}[theorem]{Corollary}
\theoremstyle{definition}
\newtheorem{definition}[theorem]{Definition}
\newtheorem{example}[theorem]{Example}
\newtheorem{remark}[theorem]{Remark}
\begin{document}

\title{On defect in finite extensions of valued fields}
\author{Silva de Souza, C. H.$^1$ and Spivakovsky, M.$^2$}
\thanks{During the realization of this project, the first author was supported by a grant from Funda\c c\~ao de Amparo \`a Pesquisa do Estado de S\~ao Paulo (process number 2023/04651-8).}
\begin{abstract} In recent decades, the defect of finite extensions of valued fields has emerged as the main obstacle in several fundamental problems in algebraic geometry such as the local uniformization problem. Hence, it is important to identify  defectless fields and study properties related to defect. In this paper we study the relations between the following properties of valued fields: simply defectless, immediate-defectless and algebraically maximal. The main result of the paper is an example of an algebraically maximal field that admits a simple defect extension.  For this, we introduce  the notion of quasi-finite elements in the generalized power series field $k\left(\left(t^\Gamma\right)\right)$.
\end{abstract}

\keywords{Defect, henselian, quasi-finite element.}
\subjclass[2010]{Primary 13A18}

\maketitle

	\section{Introduction}

Let $(K,v)$ be a valued field, $L|K$ a finite extension of $K$ and $w_1,\dots,w_r$ the distinct extensions of $v$ to $L$. {\bf The fundamendal inequality}
\begin{equation}
\sum\limits_{i=1}^re(w_i/v)f(w_i/v)\le[L:K]\label{eq:fundamental}
\end{equation}
bounding the ramification indices and the inertia degrees of the $w_i$ in terms of $[L:K]$ was proved independently by P. Roquette and I. S. Cohen -- O. Zariski \cite{Cohen} in the nineteen fifties (all the basic notions and notation are defined in  Section \ref{Prelim} below). The {\bf defect} $d(w_i/v)$ of the extension $(L|K,w_i|v)$ of valued fields, called ``ramification deficiency'' in \cite{Cohen}, measures how far the inequality \eqref{eq:fundamental} is from equality. In some form the notion of defect was already known to A. Ostrowski in the nineteen thirties; the latter proved that $d(w_i/v)$ is always a power of the characteristic exponent $p$ of the residue field $Lv$ of the valuation ring of $v$ (that is, $p=\text{char}\ Lv$ if this is positive and $p=1$ otherwise).

\vspace{0.3cm}

An extension $(L|K,w|v)$ is said to be defectless if $d(w/v)=1$. A valued field $(K,v)$ is said to be defectless if it admits no defect extensions (that is, finite extensions $(L|K,w|v)$ with $d(w/v)>1$).

\vspace{0.3cm}

In recent decades defect, popularized by F.-V. Kuhlmann and S. D. Cutkosky, among others, has emerged as the main obstacle in several fundamental problems in algebraic geometry and model theory such as the local uniformization problem and the problem of axiomatizing the existential theory of the field $\F_p((t))$. Because of this, it is important to try to identify  defectless fields and, more generally, study properties related to defect.

\vspace{0.3cm}

If P is a property of finite field extensions, the phrase ``a P-defectless field'' will mean ``a valued field admitting no defect extensions with property P''. This paper is concerned with studying the relations between the following properties of valued fields: simply defectless, immediate-defectless and algebraically maximal.

\vspace{0.3cm}

The paper is organized as follows. In  Section \ref{Prelim} we introduce all the basic notions such as valuations, valued fields, henselization and defect. In Section \ref{SimplyDefectless} we introduce the notions of simply defectless, immediate--defectless and algebraically maximal fields and study the relationships between them. We show that simply defectless implies algebraically maximal (Proposition \ref{simplydefectlessimpliesalgmax}) and that a valued field is immediate--defectless if and only if its henselization is algebraically maximal (Proposition \ref{immdefiffhensalgmax}). The main result of this paper (Theorem \ref{teoEx}) consists in providing an example of an algebraically maximal field that is not simply defectless.  Section \ref{Main} is devoted to describing the example and proving that it has the required properties. Crucial in this construction and proofs is the newly introduced notion of a {\bf quasi-finite element} of a generalized power series field.

\section{Valued fields and defect extensions}\label{Prelim}

\begin{definition}
	Take a commutative ring $R$ with unity. A \index{Valuation}\textbf{valuation} on $R$ is a mapping $\nu:R\lra \Gamma_\infty :=\Gamma \cup\{\infty\}$ where $\Gamma$ is a totally ordered abelian group (and the extension of addition and order to $\infty$ is done in the natural way), with the following properties:
	\begin{description}
		\item[(V1)] $\nu(ab)=\nu(a)+\nu(b)$ for all $a,b\in R$.
		\item[(V2)] $\nu(a+b)\geq \min\{\nu(a),\nu(b)\}$ for all $a,b\in R$.
		\item[(V3)] $\nu(1)=0$ and $\nu(0)=\infty$.
	\end{description}
\end{definition}

Let $\nu: R \lra\Gamma_\infty$ be a valuation. The ideal $\supp(\nu)=\{a\in R\mid \nu(a )=\infty\}$ is called the \textbf{support} of $\nu$. The \textbf{value group} of $\nu$ is the subgroup of $\Gamma$ generated by 
$\{\nu(a)\mid a \in R\setminus \supp(\nu) \}$ and is denoted by $\nu R$ or $\Gamma_\nu$. A valuation $\nu$ is a \index{Valuation!Krull}\textbf{Krull valuation} if $\supp(\nu)=(0)$.  If $\nu$ is a Krull valuation, then $R$ is a domain and we can extend $\nu$ to $K={\rm Quot}(R)$ in the usual way. In this case, define the \textbf{valuation ring} as
$\VR_{K}:=\{ a\in K\mid \nu(a)\geq 0 \}$. The ring $\VR_K$ is a local ring with unique maximal ideal
$\MI_K=\{a\in K\mid\nu(a)>0 \}. $ We define the \textbf{residue field} of $\nu$ to be the field $\VR_K/\MI_K$ and denote it by $ K\nu$. The image of $a\in \VR_K$ in $ K\nu$ is denoted by $a\nu$. Throughout, we will refer to the pair $(K,v)$ as a {\bf valued field}. {\bf A valued field extensions} $(K,v)\subset (L,w)$, also denoted by $(L|K,w|v)$, is a field extension $K\hookrightarrow L$ such that
$w|_K=v$.

Let $(K,v)\subset (L,w)$ be a valued field extension. Then $vK$ can be seen as a subgroup of $wL$ and $Lw$ as a field extension $Kv$. We call
$$
e(w/v):=(wL:vK) \text{ and }  f(w/v):= [Lw: Kv]
$$
the \textbf{ramification index} and the \textbf{inertia degree}, respectively. If $n=[L:K]$ is finite, then both the ramification index and the inertia degree are finite and $e(w/v)f(w/v)\leq n$. In this situation, we can have only a finite number of extension of $v$ from $K$ to $L$. Denoting by $w_1, \ldots, w_r$ all these extensions, we have the \textit{fundamental inequality} (see \cite{Eng})
$$
\sum_{i=1}^r e(w_i/v)f(w_i/v)\leq [L:K].
$$

A valued field $(K,v)$ is said to be \textbf{henselian} if for every algebraic extension $L$ of $K$ there exists only one extension of $v$ from $K$ to $L$. Fix an algebraic closure $\overline{K}$ of K and an extension $\overline{v}$ of $v$ to
$\overline{K}$. Take the separable closure $K^{\rm sep}$ of $K$ in $\overline{K}$ and let $K^h$ be the fixed field  of $G^d:=\{\sigma\in {\rm Gal}(K^{\rm sep}\mid K)\mid \overline{v}\circ \sigma =\overline{v}\}$.  Taking the restriction $v^h=\overline{v}|_{K^h}$,  the valued field $(K^h,v^h)$ is a \textbf{henselization} of $(K,v)$, that is, a henselian field which is contained in every other henselian field that extends $(K,v)$. All henselizations of $(K,v)$ are isomorphic to each other.

\vspace{0.3cm}

Let $(K,v)\subset (L,w)$ be a finite valued field extension. Take henselizations $K^h$ and $L^h$ of $K$ and $L$ inside $\overline{K}=\overline{L}$.
\begin{definition}
	The \textbf{defect} of the extension $w/v$ is defined as
$$
d(w/v) = \frac{[L^h:K^h]}{e(w/v)f(w/v)}.
$$
\end{definition}
If we denote also by $v$ the valuation on $L$ extending $v$ on $K$, then we will write the extension as $(L\mid K, v)$ and denote the defect of the extension by $d(L\mid K, v)$.

\vspace{0.3cm}

Take $w_1, \ldots, w_r$ all extensions of $v$ to $L$. We have the following equality (see \cite{Eng}):
$$
[L:K]  = \sum_{i=1}^r e(w_i/v)f(w_i/v)d(w_i/v).
$$
If the extension $w$ is unique, then 
$$
d(w/v) = \frac{[L:K]}{e(w/v)f(w/v)} \text{ and } n=efd.
$$
The \textbf{residue characteristic} of $(K,v)$  is defined as
$$
p =\begin{cases} \Char(Kv) &\quad \text{ if } \Char(Kv)>0;\\
	1 &\quad \text{ if } \Char(Kv)=0.
\end{cases} $$
By the \textit{Lemma of Ostrowski} we know that the defect is a power of the residue characteristic. 

\vspace{0.3cm}

We say that $w/v$ is a \textbf{defect extension} if $d(w/v)>1$ and a \textbf{defectless extension} if $d(w/v)=1$. The valued field $(K,v)$ is called \textbf{defectless} if every finite valued field extension of $K$ is a defectless extension. Examples of defect extensions and defectless fields can be found in \cite{kulhmannlocalunif}.

\section{Immediate-defectless  and algebraically maximal valued fields}\label{SimplyDefectless}

\subsection{Simply defectless fields}

We will call a valued field $(K,v)$ \textbf{simply defectless} if all simple algebraic extensions of $K$ are defectless.

\vspace{0.3cm}

Every defectless field is simply defectless, but the converse is not true in general, as is shown in the example presented in \cite[Section 5]{AghighKhandujaExSimplyDefectlessNotDefectless}.

\begin{prop}\label{teoSimplyDefectlessAndDefecteless}\cite[Proposition 6.1]{CaioFCS} Assume that $(K,v)$ is henselian. If
$(K,v)$ is simply defectless, $vK$ is $p$-divisible and $Kv$ is perfect, then $(K,v)$ is a defectless field. 
\end{prop}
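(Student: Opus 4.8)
The plan is to reduce an arbitrary finite extension to the separable case, where simple defectlessness applies directly via the primitive element theorem, and to use the two structural hypotheses on $vK$ and $Kv$ to eliminate inseparability entirely. First I would dispose of the formal cases. By the Lemma of Ostrowski the defect is always a power of the residue characteristic $p$, so if $p=1$ (equivalently $\Char Kv=0$) every finite extension is automatically defectless and there is nothing to prove. Likewise, if $\Char K=0$ then $K$ is perfect, so every finite extension $L\mid K$ is separable, hence simple, hence defectless by hypothesis. Thus the only substantial case is equal characteristic $p>0$, on which I concentrate.

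The heart of the argument is the claim that, under these hypotheses, $K$ must be a \emph{perfect} field. Suppose not, and choose $a\in K\setminus K^p$. Then $L:=K\!\left(a^{1/p}\right)$ is a simple purely inseparable extension of degree $p$, and since $K$ is henselian the valuation $v$ extends uniquely to $w$ on $L$. I would show this extension is \emph{immediate}, i.e. $e(w/v)=f(w/v)=1$. For the value group this is easy: $w\!\left(a^{1/p}\right)=\tfrac1p\,v(a)$, and since $vK$ is $p$-divisible this element already lies in $vK$, so $wL=vK$ and $e(w/v)=1$.

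For the residue field, normalizing $a^{1/p}$ by an element of $K$ of the same value yields a unit $\theta\in\VR_L$ whose minimal polynomial over $K$ is $X^p-\theta^p$ with $\theta^p\in\VR_K$ a unit. Its reduction modulo $\MI_K$ is $X^p-(\theta^p)w=(X-\theta w)^p$ in $Kv[X]$, where I have used that $Kv$ is perfect to extract $\theta w=\bigl((\theta^p)w\bigr)^{1/p}\in Kv$. Since the reduced minimal polynomial is a power of the single linear factor $X-\theta w$, a standard henselian (pseudo-Cauchy) argument gives $Lw=Kv$, i.e. $f(w/v)=1$. Consequently $d(w/v)=[L:K]/\bigl(e(w/v)f(w/v)\bigr)=p>1$, contradicting the assumption that $(K,v)$ is simply defectless (alternatively, $L\mid K$ is a proper immediate algebraic extension, which is forbidden by Proposition \ref{simplydefectlessimpliesalgmax}). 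Hence $K=K^p$ is perfect, and then, as in the characteristic-zero case, every finite extension is separable, hence simple, hence defectless, so $(K,v)$ is a defectless field.

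The main obstacle is the middle step: verifying that $K\!\left(a^{1/p}\right)\mid K$ is genuinely immediate, and in particular that $f(w/v)=1$. The equality $e(w/v)=1$ follows at once from $p$-divisibility, but controlling the residue field needs more than the bare statement that $Kv$ is perfect: one must rule out that $Lw\mid Kv$ is a nontrivial (necessarily purely inseparable) residue extension. Perfectness is exactly what collapses the reduced minimal polynomial to $(X-\theta w)^p$ with $\theta w\in Kv$, and extracting from this that no residue growth occurs is the technical core; the reductions surrounding it are formal.
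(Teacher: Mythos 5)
The paper does not prove this proposition; it imports it from \cite{CaioFCS}, so there is no in-paper argument to compare yours against. That said, your overall strategy is sound and essentially complete in outline: the hypotheses on $vK$ and $Kv$ serve only to force $K$ itself to be perfect (any $a\in K\setminus K^p$ would give a simple immediate, hence defect, extension $K\left(a^{1/p}\right)$ of degree $p$, contradicting simple defectlessness), after which perfectness of $K$ makes every finite extension separable, hence simple by the primitive element theorem, hence defectless by hypothesis. The characteristic-zero and mixed-characteristic reductions at the start are handled correctly.

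The one step that is not actually established as written is the immediateness of $L=K\left(a^{1/p}\right)$ over $K$, and the flaw affects $e$ as well as $f$. Showing that $w\left(a^{1/p}\right)=\frac1p v(a)$ lies in $vK$ only shows that one generator contributes no new value; it does not give $wL=vK$, since $wL$ is not a priori generated by $vK$ together with $w\left(a^{1/p}\right)$. Likewise, the reduction $(X-\theta w)^p$ of the minimal polynomial of your normalized generator $\theta$ only shows $\theta w\in Kv$; it says nothing about the residues of other elements of $\VR_L$ (for instance elements of the form $(\theta-c)/\pi$), and the appeal to ``a standard henselian (pseudo-Cauchy) argument'' does not close this. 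The correct --- and much shorter --- argument uses that $L^p\subseteq K$: for every $u\in L^*$ we have $p\,w(u)=v\left(u^p\right)\in vK$, so $wL\subseteq\frac1p vK=vK$ by $p$-divisibility; and for every $u\in\VR_L$ we have $u^p\in\VR_K$, so $(uw)^p\in Kv$ and hence $uw\in Kv$ because $Kv$ is perfect and Frobenius is injective. Thus $e(w/v)=f(w/v)=1$ and $d(w/v)=p$, as you wanted. With this substitution your proof is correct.
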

Simply defectless fields are connected to the studies of \textit{(abstract) key polynomials} and \textit{complete sequences}, as shows the next result. For more details on key polynomials  see \cite{spivamahboubkeypoly} and \cite{josneiKeyPolyPropriedades}.

\begin{prop}\label{corFCS1simplydefectless}\cite[Corollary 6.2]{CaioFCS} We have $(K,v)$ simply defectless and henselian if and only if every simple algebraic valued field extension of $K$ admits a finite complete sequence of key polynomials.
\end{prop}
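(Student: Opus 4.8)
The plan is to pass from simple algebraic extensions to valuations on the polynomial ring $K[x]$ and to isolate a single bridge identifying defect with failure of finiteness of the complete sequence. First I would fix a simple extension $L=K(\theta)$ and let $f\in K[x]$ be the minimal polynomial of $\theta$. Every extension $w$ of $v$ to $L$ pulls back to a valuation $\mu_w$ on $K[x]$ via $\mu_w(g)=w(g(\theta))$, whose support is $(f)$; the assignment $w\mapsto\mu_w$ is a bijection between the extensions of $v$ to $L$ and the valuations on $K[x]$ with support $(f)$ restricting to $v$. Under this dictionary a complete sequence of key polynomials for the extension $(L\mid K,w\mid v)$ is exactly a complete sequence for $\mu_w$, with $f$ occurring as its terminal key polynomial of value $\infty$; so the whole statement becomes a statement about the sequences attached to such $\mu_w$.

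The technical heart is the following bridge, which I would establish with the key polynomial machinery already used for Proposition \ref{teoSimplyDefectlessAndDefecteless} (see \cite{spivamahboubkeypoly} and \cite{josneiKeyPolyPropriedades}): the valuation $\mu_w$ admits a \emph{finite} complete sequence $Q_1,\dots,Q_n$ if and only if $f$ is reached as an ordinary (non-limit) key polynomial, and this happens precisely when $(L\mid K,w\mid v)$ is defectless \emph{and} $w$ is the unique extension of $v$ to $L$. The mechanism is a degree bookkeeping along the sequence: the successive augmentations accumulate the ramification and residue contributions, and in the defectless unibranch case they exhaust $\deg f=e(w/v)f(w/v)$, so the sequence closes up with $f=Q_n$. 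A nontrivial defect $d(w/v)=p^k$ (a power of the residue characteristic, by Ostrowski), or a splitting of $f$ over $K^h$ when $w$ is not the unique extension, leaves a residual factor that no ordinary augmentation can realize; it can only be absorbed by a limit key polynomial, and the defining family of $f$ is then forced to be a genuine limit, so no finite complete sequence can terminate at $f$.

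Granting the bridge, both directions reduce to bookkeeping. For $(\Rightarrow)$, if $(K,v)$ is henselian then $K^h=K$ and every algebraic extension is unibranch, while $(K,v)$ being simply defectless gives $d(w/v)=1$ for every simple $L$; hence each $\mu_w$ satisfies both hypotheses of the bridge and has a finite complete sequence. For $(\Leftarrow)$, a finite complete sequence for each simple extension forces, via the bridge, simultaneously $d(w/v)=1$ (so $(K,v)$ is simply defectless) and uniqueness of the extension of $v$ to every simple $L=K(\theta)$. Since finite separable extensions are simple and purely inseparable extensions are automatically unibranch, uniqueness on all simple extensions propagates to uniqueness on every finite, hence every algebraic, extension; that is exactly henselianity of $(K,v)$.

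The main obstacle is the bridge itself, and inside it the assertion that the defect is \emph{exactly} the obstruction to finiteness. The delicate point is the behaviour at a limit key polynomial: one must show that a defect $d(w/v)=p^k>1$ forces the defining sequence of $f$ to be a true limit (infinitely many augmentations of bounded degree, or a single limit augmentation multiplying the degree by the power of $p$ recorded by the defect), and conversely that vanishing defect lets the process terminate with $f$ as an ordinary key polynomial. Equally delicate is extracting henselianity in $(\Leftarrow)$: translating ``$f$ remains irreducible over $K^h$'' into the existence of a finite complete sequence over $K$ requires comparing the key polynomial towers over $K$ and over its henselization, which is where the uniqueness half of the bridge must be argued with care.
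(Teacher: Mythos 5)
First, a point of comparison: the paper does not prove Proposition~\ref{corFCS1simplydefectless} at all --- it is imported verbatim as \cite[Corollary 6.2]{CaioFCS}, so there is no in-paper argument to measure your proposal against. Judged on its own, your reduction scheme is sound and matches how such statements are handled in the key-polynomial literature: the dictionary $w\mapsto\mu_w$ between extensions of $v$ to $K(\theta)$ and valuations on $K[x]$ with support $(f)$ is standard; the propagation of unibranchedness from simple extensions to henselianity is correct (two distinct extensions of $v$ to $\overline K$ have distinct valuation rings, hence differ already on some $K(\alpha)$); and purely inseparable extensions are indeed automatically unibranch. Granting your ``bridge,'' the two directions do follow by the bookkeeping you describe.

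The genuine gap is that the bridge \emph{is} the theorem, and you have asserted it rather than proved it. The statement ``$\mu_w$ admits a finite complete sequence if and only if $(L\mid K,w\mid v)$ is defectless and $w$ is the unique extension of $v$'' is exactly the per-extension content of Corollary 6.2 of \cite{CaioFCS}; deducing the proposition from it is essentially a reformulation, not a proof. The two implications you flag as ``delicate'' are precisely the ones that carry all the mathematics: (i) that a nontrivial defect $d(w/v)=p^k$ forces $f$ to arise only as a limit key polynomial, so that no finite complete sequence terminates at $f$; and (ii) that in the defectless unibranch case the MacLane--Vaqui\'e/abstract augmentation process closes up after finitely many ordinary steps. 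Neither follows from generic ``degree bookkeeping'': (i) requires the structure theory relating limit key polynomials to pseudo-convergent sequences of algebraic type and to defect (the content of \cite{josneiKeyPolyPropriedades} and of the graded-ring analysis in \cite{CaioFCS}), and (ii) requires comparing the key-polynomial tower over $K$ with the one over $K^h$, since over a non-henselian base even a defectless extension such as $\Q(i)\mid\Q$ with the $5$-adic valuation needs infinitely many degree-one augmentations before the limit polynomial $x^2+1$ appears --- which is exactly why uniqueness of the extension must enter the equivalence. Until these two implications are established (or explicitly quoted from \cite{CaioFCS} or \cite{spivamahboubkeypoly}), the argument is a correct framing of the problem rather than a proof of it.
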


\subsection{Algebraically maximal and immediate-defectless  fields}

A valued field extension $( L\mid  K, v)$ is said to be \textbf{immediate} if $vL=v K$ and $Lv=Kv$. 

\begin{definition} The valued field $(K,v)$ is said to be {\bf algebraically maximal} if it admits no proper algebraic immediate extensions.
\end{definition}

\begin{remark} The valued field $(K,v)$ is algebraically maximal if and only if it admits no {\it simple} algebraic immediate extensions.
\end{remark}

Every algebraically maximal field is henselian, since the henselization of a valued field is an immediate extension. The converse is not true in general (see \cite[Theorem 3.7]{kulhmannlocalunif}). Every henselian defectless field is algebraically maximal, but the converse does not hold in general (see \cite[Theorem 3.26]{kulhmannlocalunif}). 

\begin{remark} A field $K$ is algebraically maximal if and only if for every $x$ algebraic over $K$ the set $\{v(x-a)\ |\ a\in
K\}$ contains a maximal element (this is the (i) $\iff$ (ii) implication of Theorem 1.1 of \cite{BK}). The paper \cite{BK} contains other interesting information on comparing the algebraically maximal condition with the existence and non-existence of simple defect extensions.
\end{remark}
\medskip

%

\begin{definition} The valued field $(K,v)$ is called {\bf immediate-defectless} if it admits no immediate defect extensions.
\end{definition}

\begin{remark}
	An algebraically maximal field is clearly immediate-defectless. For henselian fields the converse is also true: a henselian field is immediate-defectless if and only if it is algebraically maximal.
\end{remark}

\noindent{\bf Notation.} Fix an extension of $v$ to the algebraic closure $\bar K$ of $K$ and let $(K^h,v^h)$ be the corresponding henselization.

\begin{prop}\label{simplydefectlessimpliesalgmax} Assume that $(K,v)$ is simply defectless. Then it is immediate-defectless.
\end{prop}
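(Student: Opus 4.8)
The plan is to reduce the assertion to a statement about the henselization $(K^h,v^h)$, the essential point being that the property of being simply defectless is inherited by $K^h$. Throughout I fix the extension $\bar v$ of $v$ to $\bar K$ together with the induced henselization $(K^h,v^h)$, so that for every $\beta$ algebraic over $K$ there is a well-defined prolongation $w=\bar v|_{K(\beta)}$. The goal is to show that every finite immediate extension is defectless.

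First I would record how defect interacts with henselization. For $\beta$ algebraic over $K$, the henselization of the finite extension $K(\beta)$ is $K(\beta)^h=K^h(\beta)$ (the henselization of a finite extension is its compositum with $K^h$), and since henselizations are immediate the ramification index and inertia degree of $w$ are unchanged on passing from $K(\beta)|K$ to $K^h(\beta)|K^h$. Comparing the two expressions for the defect afforded by the definition then yields $d(K(\beta)|K,w)=d(K^h(\beta)|K^h)$.

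The crux of the argument is to deduce that $K^h$ is itself simply defectless; this is not automatic, since simple defectlessness is a priori fragile under algebraic extension, and it is exactly the compatibility above that saves the day. Let $\gamma$ be algebraic over $K^h$. As $K^h|K$ is algebraic, $\gamma$ is algebraic over $K$, so $K^h(\gamma)=K(\gamma)^h$ and the identity of the previous paragraph applies. Since $K$ is simply defectless we have $d(K(\gamma)|K,w)=1$, whence $d(K^h(\gamma)|K^h)=1$. Thus every simple algebraic extension of $K^h$ is defectless, i.e.\ $K^h$ is simply defectless.

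It then remains to treat the henselian case, which is easy. Because $K^h$ is henselian, any simple immediate extension $K^h(\gamma)|K^h$ has $e=f=1$ and a unique prolongation, so its degree equals its defect, which is $1$ by the previous step; hence $K^h$ has no proper simple immediate extension and is therefore algebraically maximal, by the Remark that algebraic maximality is equivalent to the absence of simple algebraic immediate extensions. Finally, given any finite immediate extension $(L|K,w|v)$, the extension $L^h|K^h$ is again algebraic and immediate (value group and residue field are preserved by henselization and by immediateness), so algebraic maximality forces $L^h=K^h$; thus $d(w/v)=[L^h:K^h]/\big(e(w/v)f(w/v)\big)=1$, and $K$ admits no immediate defect extension. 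The one step I expect to require genuine care is the inheritance of simple defectlessness by $K^h$, that is, the passage through the identity $d(K(\gamma)|K,w)=d(K^h(\gamma)|K^h)$.
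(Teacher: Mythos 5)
Your proof is correct and follows essentially the same route as the paper: reduce to the henselization and then use the fact that a simple immediate algebraic extension of a henselian field is automatically a defect extension. You merely spell out the step the paper compresses into ``replacing $(K,v)$ by $\left(K^h,v^h\right)$ does not change the problem,'' namely that $K^h$ inherits simple defectlessness via $K(\gamma)^h=K^h(\gamma)$ and the immediateness of henselization.
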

\begin{proof} Since henselization is immediate and every finite subextension of $K^h$ is defectless, replacing
$(K,v)$ by $(K^h,v^h)$ does not change the problem. We will assume that $K$ is henselian and, in particular, ``immediate--defectless'' is synonymous with ``algebraically maximal''.
	\medskip
	
	Consider a non-trivial immediate extension $(L|K,v)$, aiming for contradiction. Take a simple sub-extension $K(x)\subset L$ with $x\notin K$. Then $(K(x)|K,v)$ is immediate, hence a defect extension (since $K$ is henselian). This gives the desired contradiction.
\end{proof}

%

\begin{prop}\label{immdefiffhensalgmax}
	The field $(K,v)$ is immediate-defectless if and only if $\left(K^h,v^h\right)$ is algebraically maximal. 
\end{prop}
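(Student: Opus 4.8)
The plan is to reduce the statement to a symmetric criterion about immediate defect extensions and then establish that criterion in both directions. Since $\left(K^h,v^h\right)$ is henselian, the Remark preceding the proposition tells us that $\left(K^h,v^h\right)$ is algebraically maximal if and only if it is immediate-defectless. Hence it suffices to prove that $(K,v)$ is immediate-defectless if and only if $\left(K^h,v^h\right)$ is immediate-defectless, that is, that $(K,v)$ admits an immediate defect extension if and only if $\left(K^h,v^h\right)$ does. I would work throughout inside the fixed algebraic closure $\overline{K}$ with the chosen extension of $v$, and rely on three facts: the henselization $K^h\mid K$ is immediate; for any finite extension $N\mid K$ the henselization of $N$ is the compositum $N\cdot K^h$ (valid even when $N\mid K$ is inseparable, since an algebraic extension of a henselian field is henselian, forcing $N^h\subseteq N\cdot K^h$, while $N^h$ must contain a henselization of $K$, hence $K^h$, giving the reverse inclusion); and the defect formula $[N^h:K^h]=e(w/v)f(w/v)d(w/v)$.

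For the implication that an immediate defect extension of $K$ produces one of $K^h$ (the ``lifting'' direction), I would take a finite immediate extension $(L\mid K,w)$ with $d(w/v)>1$ and set $L^h=L\cdot K^h$. The extension $L^h\mid K^h$ is immediate because $v^hL^h=wL=vK=v^hK^h$ and $L^hv^h=Lw=Kv=K^hv^h$, where I use that $K^h\mid K$, $L^h\mid L$ and $L\mid K$ are all immediate. Since $K^h$ is henselian, the defect of $L^h\mid K^h$ equals $[L^h:K^h]=e(w/v)f(w/v)d(w/v)=d(w/v)>1$, so $L^h\mid K^h$ is a proper immediate defect extension and $\left(K^h,v^h\right)$ is not immediate-defectless.

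For the converse ``descent'' direction I would start from a finite immediate extension $(M\mid K^h,w)$ with defect $>1$, write $M=K^h(\alpha)$, and let $g\in K^h[X]$ be the minimal polynomial of $\alpha$. Because $K^h$ is the directed union of the finite subextensions $K_0\mid K$ it contains, the finitely many coefficients of $g$ already lie in some such $K_0$; set $N=K_0(\alpha)$. Then $g$ remains irreducible over $K_0$, so $N^h=N\cdot K^h=K^h(\alpha)=M$ and $[N^h:K^h]=[M:K^h]$. The extension $N\mid K$ is immediate, since $vK\subseteq wN\subseteq wM=v^hM=v^hK^h=vK$ forces $wN=vK$, and likewise $Nw=Kv$ for the residue fields. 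Consequently $e(w/v)=f(w/v)=1$ for $N\mid K$, and the defect formula gives $d(N\mid K,w)=[N^h:K^h]=[M:K^h]$, which is exactly the defect of $M\mid K^h$ and hence $>1$. Thus $N\mid K$ is an immediate defect extension of $K$.

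The crux of the argument, and the step I expect to require the most care, is the identity $N^h=N\cdot K^h$ used in the descent: it is what lets a finite extension of the henselization be realized, with the same defect, by a finite extension of the ground field, and the only subtlety is to check it without assuming separability of $N\mid K$. Once this and the immediacy bookkeeping (squeezing the value groups and residue fields of the intermediate fields between those of $K$ and of $M$) are in place, the equivalence ``$(K,v)$ has an immediate defect extension $\iff$ $\left(K^h,v^h\right)$ has one'' follows, and combining it with the Remark for henselian fields yields the proposition.
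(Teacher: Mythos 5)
Your proposal is correct and follows essentially the same route as the paper's proof: in one direction you descend a simple immediate defect extension of $K^h$ to a finite subfield $K_0\subset K^h$ containing the coefficients of the minimal polynomial, and in the other you lift an immediate defect extension of $K$ to its henselization and compare with $K^h$; the reformulation via ``immediate-defectless of $K^h$'' using the Remark, and the explicit justification of $N^h=N\cdot K^h$, are only cosmetic additions. The single point to patch is that in the descent direction a finite immediate defect extension $M$ of $K^h$ need not be simple, so ``write $M=K^h(\alpha)$'' should be preceded by replacing $M$ with $K^h(\alpha)$ for any $\alpha\in M\setminus K^h$, which is still immediate and still has defect $[K^h(\alpha):K^h]>1$ since $K^h$ is henselian.
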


\begin{proof}
	Assume that $(K,v)$ is immediate--defectless and that $(L,w)$ is a non-trivial simple algebraic immediate extension of
$\left(K^h,v^h\right)$. Then $\left(L\mid K^h, w\mid v^h\right)$ has defect (because $v^h$ is the unique extension of $w$, since
$\left(K^h,v^h\right)$ is henselian). Write $L=\frac{K^h[x]}{(g)}$, where $g\in K^h[x]$  is a polynomial generating a maximal ideal $(g)\subset K^h[x]$. Let $K_0$ denote a finite extension of $K$, contained in $K^h$ and containing all the coefficients of $g$. Let
$L_0:=\frac{K_0[x]}{(g)}$ and $w_0=w|_{L_0}$. Then $L=L_0^h$. Since the extension $\left(L\mid K^h, w\mid v^h\right)$ has defect, so does $\left(L_0\mid K, w_0\mid v\right)$ (by definition of defect). Since both $\left(L\mid K^h, w\mid v^h\right)$ and $\left(\left.K^h\right|K,\left.v^h\right|v\right)$ are immediate, we conclude that
$(L_0|K,w|v)$ is an immediate defect extension, a contradiction. 
	
	Next, assume that  $(K^h,v^h)$ is algebraically maximal and that $(L, w)$ is an immediate defect extension of $(K,v)$.  Consider the henselization $(L^h, w^h)$ of $(L,w)$. 
	We have
	\[
	(K,v) \subseteq (L,w) \subseteq (L^h, w^h).
	\]
	By definition of henselization, we have $\left(K^h,v^h\right)\subseteq\left(L^h, w^h\right)$. Since $(L|K, w|v)$ has defect, we must have $K^h\neq L^h$ (by definition of defect). Hence, $(L^h, w^h)$ is a proper extension of $(K^h, v^h)$. Since $(L|K,w|v)$ is immediate and henselizations are immediate extensions, $w^hL^h=v^hK^h$ and $L^hv^h=K^hv^h$ and then $(L^h,w^h)$ is an immediate extension of $(K^h,v^h)$, a contradiction. 
\end{proof}

\subsection{A defectless field that is not algebraically maximal}

If $(K,v)$ is not henselian, then $(K,v)$ simply defectless implies that it is immediate-defectless but not that it is algebraically maximal. For example, if $k$ is a field and $t$ an independent variable then $k(t)$ with the $t$-adic valuation $v$ is defectless, but not algebraically maximal, as we now show.
\smallskip

The fact that $k(t)$ with the $t$-adic valuation is defectless  follows from the more general Theorem 5.1 of \cite{kulhmannlocalunif}. We give a direct proof in order to be self-contained. For simplicity, we will assume that $k$ is perfect. The case of an arbitrary $k$ can be easily reduced to the perfect one by considering the prime field of $k$, but we omit the details.
\smallskip

We will need two lemmas. Let $K:=k\langle t\rangle$ be the henselization  of $k(t)$

\begin{lemma} Let $(L|K,v)$ be a finite extension of valued fields. Then there exists a finite extension $k\subset k'$ and an element $t'\in L$ such that $L\cong k'\left\langle t'\right\rangle$ endowed with the $t'$-adic valuation.
\end{lemma}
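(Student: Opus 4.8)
The plan is to manufacture inside $L$ both a copy of its residue field and a uniformizer, and then to identify $L$ with the henselization of the rational function field they generate. Since $K=k\langle t\rangle$ is henselian, the valuation $v$ extends uniquely to $L$, so it suffices to exhibit $k'$ and $t'$ with $L=k'\langle t'\rangle$. Set $e=(vL:vK)$ and $f=[Lv:Kv]$. As $vK=\Z$ and $vL$ is a finitely generated ordered group squeezed between $\Z$ and $\Q$, we have $vL=\tfrac1e\Z$; and $Lv=:k'$ is a degree-$f$ extension of $Kv=k$. Because $k$ is perfect, $k'|k$ is separable and $k'$ is again perfect. First I would lift the residue field: writing $k'=k(\bar\alpha)$ with $\bar\alpha$ a simple root of its separable minimal polynomial $\bar g\in k[X]\subseteq\VR_L[X]$, Hensel's lemma (valid since $L$, being finite over the henselian $K$, is henselian) produces $\alpha\in\VR_L$ with $g(\alpha)=0$ and $\alpha v=\bar\alpha$. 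Then $k':=k(\alpha)\subseteq L$ maps isomorphically onto $Lv$, giving a coefficient field inside $L$.

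Next I would choose $t'\in L$ with $v(t')=\tfrac1e$, a generator of $vL$. Since $v(t')>0$ we have $t'\in\MI_L$, and $t'$ is transcendental over $k'$: an algebraic relation over $k'$ would, upon reduction, make $t'v=0$ a root of a nonzero polynomial over $k'$ with nonzero constant term, which is impossible. Consequently $v$ restricts on $k'(t')$ to a positive rescaling of the $t'$-adic valuation; the two share the same valuation ring, so $\left(k'(t'),v\right)$ is exactly $k'(t')$ with its $t'$-adic valuation. As $L$ is henselian and contains $k'(t')$, it contains the henselization $k'\langle t'\rangle=(k'(t'))^h$. By construction $v\bigl(k'\langle t'\rangle\bigr)=\tfrac1e\Z=vL$ and $\bigl(k'\langle t'\rangle\bigr)v=k'=Lv$, so $L\mid k'\langle t'\rangle$ is immediate.

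It remains to prove $L=k'\langle t'\rangle$, and this is the main obstacle. Any $\theta\in L\setminus k'\langle t'\rangle$ would generate a proper simple immediate extension $k'\langle t'\rangle(\theta)\subseteq L$ (subextensions of immediate extensions are immediate), so by the remark characterizing algebraic maximality via simple immediate extensions, equality is equivalent to $k'\langle t'\rangle$ being algebraically maximal — precisely where perfectness of $k$ is decisive. When the residue characteristic is $p=1$ this is automatic, since by Ostrowski every defect is a power of $p$ and a finite immediate extension then has trivial defect; so I may assume equal characteristic $p>0$ with perfect residue field $k'$ and value group $\Z$. Using the criterion recalled above (algebraic maximality $\iff$ for every algebraic $x$ the set $\{v(x-a)\mid a\in k'\langle t'\rangle\}$ attains a maximum) and reducing by Ostrowski to a hypothetical degree-$p$ immediate extension, one rules out the inseparable case $\theta^p=c$ by using perfectness of $k'$ to extract $p$-th roots in the residue field and successively improve the approximation of $\theta$, and the separable (Artin–Schreier) case $\theta^p-\theta=a$ by reducing $a$ modulo $\wp\bigl(k'\langle t'\rangle\bigr)=\{b^p-b\}$ to ever higher value, forcing $a\in\wp\bigl(k'\langle t'\rangle\bigr)$ and hence the extension to be trivial. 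This approximation argument — showing that no defect can survive over a perfect residue field — is the crux; everything else reduces to Hensel's lemma and elementary valuation theory.
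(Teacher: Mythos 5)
Your construction of $k'$ and $t'$ is the same as the paper's: lift the residue field via the Primitive Element Theorem and Hensel's Lemma (using perfectness of $k$), choose $t'$ of value $\frac1e$, observe that $k'\langle t'\rangle\subseteq L$ because $L$ is henselian, and that $L\mid k'\langle t'\rangle$ is algebraic and immediate. At that point the paper simply declares the immediate algebraic extension of henselian fields to be an isomorphism; you are right that this is where all the content lies, and your decision to prove that $k'\langle t'\rangle$ is algebraically maximal (reducing, as the paper itself does elsewhere, to degree-$p$ Artin--Schreier and purely inseparable extensions) is the honest way to finish. Your Artin--Schreier half does close: since the value group is $\frac1e\Z$, the successive replacements of the right-hand side $b$ by $b-(a^p-a)$ strictly increase $v(b)$ inside a discrete group while $v(b)<0$, so after finitely many steps either $v(b)\notin p\cdot\frac1e\Z$ (so $e>1$), or the residue equation $X^p-X=\bar b$ is irreducible over $k'$ (so $f>1$), or $v(b)>0$ and Hensel kills the extension.

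The gap is in the purely inseparable case. There the successive approximation has no reason to terminate: extracting $p$-th roots of residues produces $a_n\in k'\langle t'\rangle$ with $v(\theta-a_n)$ strictly increasing and unbounded, and this is \emph{not} a contradiction --- it only shows that $\theta$ lies in the completion $k'((t'))$ of $k'\langle t'\rangle$. An immediate extension is precisely one in which such approximating sequences exist without a limit in the base field, so ``successively improving the approximation'' cannot by itself rule the extension out. You still need the separate fact that an element $\theta\in k'((t'))$ that is algebraic over $k'(t')$ and satisfies $\theta^p=c\in k'\langle t'\rangle$ already lies in $k'\langle t'\rangle$. This can be salvaged: using that the henselization is the relative separable algebraic closure of $k'(t')$ in $k'((t'))$, one checks that $k'((t'^p))$ and $k'(t')$ are linearly disjoint over $k'(t'^p)$, so $c\in k'\langle t'\rangle\cap k'((t'))^p$ is separable algebraic over $k'(t'^p)$, and applying the inverse of Frobenius shows that $\theta=c^{1/p}$ is separable over $k'(t')$, hence lies in $k'\langle t'\rangle$. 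Without some such supplement the inseparable half of your ``crux'' is unproved. (For what it is worth, the paper's own proof does not justify this step either; it asserts ``immediate, hence an isomorphism'' outright, so your proposal at least locates the real difficulty.)
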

\begin{proof} Let $k'=Lv$. Since $k$ is perfect, the field extension $k'|k$ is separable, hence simple by the Primitive Element Theorem. Take an element $\bar a\in k'$ such that
	\[
	k'=k\left(\bar a\right).
	\]
	Now, $\bar a$ satisfies a separable algebraic equation $f$ over $k$. By Hensel's Lemma, there exists $a\in L$ such that $\bar a=av$ and $f(a)=0$. We have obtained an embedding $k'\cong k(a)\subset L$.
	
	Make the identifications $vK\cong\Z$ and $vL\cong\frac1e\Z$, where $e=[vL:vK]$. Take an element $t'\in L$ such that $vt'=\frac1e$. Then $k'\left\langle t'\right\rangle\subset L$ (since $L$ is henselian). Since both $L$ and $k'\left\langle t'\right\rangle$ have transcendence degree 1 over $k$, the  extension $L\left|k'\left\langle t'\right\rangle\right.$ is algebraic. Moreover, the algebraic extension $\left(L\left|k'\left\langle t'\right\rangle\right.,v\right)$ of henselian valued fields is immediate, hence an isomorphism. This completes the proof of the lemma.
\end{proof}

\begin{lemma}\label{L|Kdefectless} Let $(L|K,v)$ be a finite extension of valued fields. Then $(L|K,v)$ is defectless.
\end{lemma}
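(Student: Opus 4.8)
The statement claims that any finite extension $(L|K,v)$ with $K = k\langle t\rangle$ the henselization of $k(t)$ is defectless. The plan is to reduce this to the structural description already provided by the previous lemma and then compute the defect directly, showing it equals $1$.

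**Main steps.**

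First I would invoke the preceding lemma: there exists a finite extension $k \subset k'$ and an element $t' \in L$ such that $L \cong k'\langle t'\rangle$ with the $t'$-adic valuation. This identifies $L$ completely as a henselization of a rational function field, which is the key leverage. Next I would compute the relevant numerical invariants. Since $K$ is henselian (being a henselization), the extension of $v$ to $L$ is unique, so the defect formula reduces to $d(L|K,v) = [L:K]/\bigl(e(w/v)f(w/v)\bigr)$. The residue field of $K = k\langle t\rangle$ is $k$ and its value group is $\Z$; the residue field of $L = k'\langle t'\rangle$ is $k'$ and its value group is $\frac{1}{e}\Z$ for $e = [vL:vK]$. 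Thus $f(w/v) = [k':k]$ and $e(w/v) = e$. I would then argue that $[L:K] = e \cdot [k':k]$ by a transcendence/degree count — computing the degree of $k'\langle t'\rangle$ over $k\langle t\rangle$ directly in terms of the degree of the residue extension times the ramification. Concluding, $e f = [L:K]$, so the defect is $1$.

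**The main obstacle.**

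The delicate point will be the explicit degree computation $[L:K] = e \cdot [k':k]$, i.e. verifying that the fundamental inequality is actually an equality in this setting. The previous lemma gives the isomorphism type of $L$ but not immediately the degree over $K$. I expect to establish this by exhibiting $L = k'\langle t'\rangle$ as built from $K = k\langle t\rangle$ in two clean stages: an unramified residually-generating step $k\langle t\rangle \subset k'\langle t\rangle$ contributing $f = [k':k]$ with trivial ramification, followed by a purely ramified step $k'\langle t\rangle \subset k'\langle t'\rangle$ where ${t'}^e$ is (up to a unit) $t$, contributing $e$ with trivial residue extension. Each stage is defectless for transparent reasons — the first because it is generated by a separable residue element lifted via Hensel's Lemma (so $e=1$, $f=[k':k]=[L:K]$), and the second because adjoining an $e$-th root of a uniformizer in a henselian field of the appropriate residue characteristic gives a totally ramified defectless extension of degree exactly $e$. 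Multiplicativity of defect in towers then yields that the whole extension is defectless. The one technicality to watch is the behavior when $e$ is divisible by the residue characteristic $p$; here I would need the perfectness of $k$ (hence of $k'$) and $p$-divisibility considerations to ensure the ramified step remains defectless, which is precisely where the standing assumption that $k$ is perfect earns its keep.
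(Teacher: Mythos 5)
Your reduction to the tower $k\langle t\rangle \subset k'\langle t\rangle \subset k'\langle t'\rangle$ is a reasonable start, and the first (unramified) stage is indeed unproblematic: since $k$ is perfect, $k'|k$ is separable, the Hensel lift $a$ of a primitive element generates $k'\langle t\rangle$ over $k\langle t\rangle$ with degree $[k':k]$, and that stage is defectless. The genuine gap is in the second stage. The element $t'$ supplied by the preceding lemma is merely \emph{some} element of value $\frac1e$; it is not an $e$-th root of a uniformizer of $K'=k'\langle t\rangle$. One has $t = u\,(t')^e$ for a unit $u$ of $L$, and to reduce to ``adjoining an $e$-th root of a uniformizer'' you would need to replace $t'$ by $\zeta t'$ with $\zeta^e = u$ inside $L$. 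When $p \mid e$ this requires extracting $p$-th roots of units, which in general lands outside $L$ (such roots are typically purely inseparable over $L$, e.g.\ $(1+t')^{1/p}\notin k'\langle t'\rangle$), and even the tame part requires $\bar u$ to have an $m$-th root in $k'$, which perfectness does not give. So the claim that the ramified stage is ``defectless for transparent reasons'' is unjustified precisely in the only case where, by Ostrowski's Lemma, defect can occur at all, namely $p\mid e$. What you call ``the one technicality to watch'' is in fact the entire content of the lemma, and the proposal contains no idea for closing it: without knowing a priori that $[L:K'] = e$ (equivalently, that some $\tau\in L$ satisfies an Eisenstein equation of degree $e$ over $K'$), the defect of this totally ramified stage is exactly what remains to be excluded.

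For comparison, the paper never attempts to compute $[L:K]=ef$ directly. It argues by induction on $[L:K]$ after passing to a normal closure: extensions of degree prime to $p$ are defectless by Ostrowski's Lemma; an extension of degree $p$ with defect would be immediate (since $efd=p$ forces $e=f=1$), and the structure lemma then gives $L\cong k'\langle t'\rangle$ with $k'=k$ and $e=1$, i.e.\ $L=K$, a contradiction; the general case is reduced to these two via an intermediate field of degree $p$ or the fixed field of a $p$-Sylow subgroup. If you want to keep your tower, you will need some analogous device (reduction to prime degree, or a careful passage to the completion $k'((t))$ together with an argument that defect is preserved, which is itself delicate for inseparable extensions) to handle the wildly ramified step.
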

\begin{proof} We have the following cases to consider.  
	\smallskip 
	
	\noindent  {\bf Case 1.} If $p\ \nmid\ [L:K]$, then the extension is defectless by Ostrowski's Lemma.
	\smallskip
	
	\noindent{\bf Case 2.} Assume that $[L:K]=p$. If $(L|K,v)$ were a defect extension, then it would be an immediate algebraic extension of the henselian field $K$, a contradiction. This proves the lemma in Case 2.
	\smallskip
	
	\noindent{\bf Case 3.} The general case. Enlarging $L$, if necessary, we may assume that the extension $L|K$ is normal. We proceed by induction on $[L:K]$, where we consider Cases 1 and 2 to be the base of the induction (which includes all the cases where $[L:K]$ is prime).
	
	Assume that $d:=[L:K]$ is a composite number divisible by $p$ and that the lemma is true for all the normal extensions of degree strictly smaller than $d$. Let $G={\rm Aut}(L/K)$.
	\smallskip
	
	\noindent{\bf Case 3a.} There exists an intermediate field $K'$ with $K\subsetneqq K'\subsetneqq L$ such that
	$\left[K':K\right]=p$ (this happens, in particular, whenever $L|K$ is inseparable).
	\smallskip
	
	\noindent{\bf Case 3b.} The extension $L|K$ admits no subextension of degree $p$ over $K$. In particular, $d$ is not a power of $p$ and $L|K$ is separable, so $|G|=d$. Since $p\ |\ d$ but $d$ is not a power of $p$, $G$ contains a proper $p$-Sylow subgroup $H$. Let $K'$ be the fixed field of $H$ (which need not be a normal extension of $K$).
	\smallskip
	
	In Case 3a, the extension $\left(\left.K'\right|K,v\right)$ is defectelss by Case 2. In case 3b, we have $p\ \nmid\ \left[K':K\right]$, so, again, the extension $\left(\left.K'\right|K,v\right)$ is defectless. In all cases, the normal extension
	$\left(L\left|K',v\right.\right)$ is defectless by the induction assumption, hence so is  $(L|K,v)$. This completes the proof of the lemma.
\end{proof}

\begin{prop} The valued field $k(t)$ is a defectless field that is not algebraically maximal.
\end{prop}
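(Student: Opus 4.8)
The statement to prove has two parts: that $k(t)$ with the $t$-adic valuation is defectless, and that it is not algebraically maximal. The plan is to dispatch these separately, since the machinery for each is quite different.

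For defectlessness, the idea is that defect is a property measured after passing to henselizations. Given any finite extension $(M|k(t),w|v)$, the defect $d(w/v)$ equals $d(M^h|K,w^h|v^h)$ where $K=k\langle t\rangle$ is the henselization of $k(t)$, because the defect is defined via the henselizations and these are unchanged by the passage from $k(t)$ to $K$. More precisely, I would argue that every finite extension $(L|K,v)$ of the henselization $K$ is defectless—this is exactly Lemma \ref{L|Kdefectless}, which has already been proven above via the induction on $[L:K]$ reducing to the prime-degree cases. So the first step is to reduce defectlessness of $k(t)$ to Lemma \ref{L|Kdefectless}: take an arbitrary finite extension of $k(t)$, pass to henselizations, invoke the lemma to conclude the henselian extension is defectless, and observe that this forces the defect of the original extension to be $1$.

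For the second part—that $k(t)$ is not algebraically maximal—the cleanest route is to exhibit a proper algebraic immediate extension. The natural candidate is the henselization itself: $K=k\langle t\rangle$ is a proper algebraic extension of $k(t)$ (the henselization of a non-henselian valued field is strictly larger), and henselization is always an immediate extension, so $vK=v(k(t))$ and the residue fields coincide. Therefore $k(t)\subsetneq k\langle t\rangle$ is a proper algebraic immediate extension, which directly contradicts algebraic maximality. One should note that $k(t)$ is genuinely non-henselian (for instance because the $t$-adic valuation admits more than one extension to suitable algebraic extensions), so the inclusion is strict.

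The main obstacle here is almost entirely on the defectlessness side, and that obstacle has already been absorbed into Lemma \ref{L|Kdefectless}; the remaining work is the bookkeeping of the reduction to henselizations and checking that defect is preserved. The non-maximality half is essentially immediate once one recalls that henselization is a proper immediate extension for a non-henselian field, so I expect no real difficulty there. The only subtle point worth stating carefully is why the defect of the original (possibly non-henselian) extension equals $1$: this follows directly from the definition of defect as the ratio $[L^h:K^h]/(e f)$, together with Lemma \ref{L|Kdefectless} applied to the extension of henselizations.
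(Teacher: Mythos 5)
Your proposal is correct and follows essentially the same route as the paper: reduce defectlessness to Lemma \ref{L|Kdefectless} via the definition of defect through henselizations, and observe that the henselization $k\langle t\rangle$ is itself a proper algebraic immediate extension of $k(t)$, contradicting algebraic maximality. Your extra remark that $k(t)$ is genuinely non-henselian (so the inclusion is strict) is a worthwhile point that the paper leaves implicit.
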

\begin{proof} By Lemma \ref{L|Kdefectless}, the henselization $K=k\langle t\rangle$ of $k(t)$ is defectless, hence so is
$k(t)$ by definition of defect. The result follows immediately from the fact that the henselization $K$ is a proper immediate extension of $k(t)$.

\end{proof}

\section{An algebraically maximal field that is not simply defectless}\label{Main}

The converse of Proposition \ref{simplydefectlessimpliesalgmax} does not hold in general. Indeed, in this section we will construct an example of an algebraically maximal field that is not simply defectless.
We start with some preliminaries.
\\


Let $p,q$ be two distinct prime numbers, $k$ a field of characteristic $p$, $t$ an independent variable and $\Gamma$ a subgroup of $\Q$ containing $\frac1{q^i}$ for all $i\in\N$. We work with valued subfields of the valued field $k\left(\left(t^{\frac1p\Gamma}\right)\right)$ endowed with the $t$-adic valuation $v$. For $i\in\N_{>0}$, put $\beta_i=1-\frac1{q^i}$. The sequence $(\beta_i)_{i\in\N}$ is an increasing sequence  of elements of $\Gamma$ such that
\begin{equation}
	\sup\limits_{i\in\N}\{\beta_i\}=1.
\end{equation}
Consider the generalized power series $w=\sum\limits_{i=1}^\infty t^{\beta_i}\in k\left(\left(t^\Gamma\right)\right)$. For $i\in\N_{>0}$, write $w=w_{0i}+w_i$, where
\begin{eqnarray}
	w_{0i}&=&\sum\limits_{j=1}^it^{\beta_j}\quad\text{and}\\
	w_i&=&\sum\limits_{j=i+1}^\infty t^{\beta_j};
\end{eqnarray}
we have $vw_i>0$ for all $i\in\N_{>0}$.
\medskip

\noindent{\bf Notation.} For an element $b\in\N$, let $\partial_b$ denote the $b$-th Hasse derivative with respect to $W$. 
\medskip

Let $n$ be a non-negative integer and $\gamma_1,\dots,\gamma_n$ strictly positive elements of $\Gamma$. Let
$\Gamma^*\supset\Gamma$ be another ordered abelian group and $\beta\ge1$ an element of $\Gamma^*$. Let $\mathcal O$ denote the valuation ring associated to  the $t$-adic valuation of $k\left(t^{\gamma_1},\dots,t^{\gamma_n}\right)$ and $\tilde{\mathcal O}$ --- the henselization of $\mathcal O$. Take elements $a_i\in\tilde{\mathcal O}$, $i\in\N$, such that not all of the $a_i$ are equal to 0, and an element $c\in k$.  Let $W$ be an independent variable and consider
\[
f=\sum\limits_{i\in\N}a_iW^i\in\tilde{\mathcal O}\langle W\rangle\setminus\{0\},
\]
where $\tilde{\mathcal O}\langle W\rangle$ denotes the common henselization of the rings $\tilde{\mathcal O}\left[W\right]$ and
$\mathcal O\left[W\right]$.
\begin{prop}\label{truncation} There exist $l_0\in\N_{>0}$ and $e\in\N$ such that for all integers $l\ge l_0$ the following conditions hold.
	\begin{enumerate}
		\item
		\[
		v\left(f\left(w+ct^\beta\right)\right)=v(f(w_{0l}))\ne\infty.
		\]
		\item
		\begin{eqnarray*}
			v(f(w_{0l}))<&\min\limits_{i\in\N_{>0}}&\left\{v\left(\partial_if(w)\right)+iv(w_l)\right\}=\\
			&\min\limits_{i\in\N_{>0}}&\left\{v\left(\partial_if(w)\right)+i\beta_{l+1}\right\}.
		\end{eqnarray*}
		\item If $f\notin\tilde{\mathcal O}$, then
		\[
		v\left(\partial_{p^e}f\left(w+ct^\beta\right)\right)+p^e\beta_{p^e+1}<
		v\left(\partial_if\left(w+ct^\beta\right)\right)+i\beta_{l+1}\ \forall i\in\N_{>0}\setminus\{p^e\}.
		\]
	\end{enumerate}
\end{prop}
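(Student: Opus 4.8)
The plan is to analyse both $f(w+ct^\beta)$ and $f(w_{0l})$ through the Hasse--Taylor expansion $f(X+Y)=\sum_{i\ge0}(\partial_i f)(X)\,Y^i$ about the truncation $X=w_{0l}$. Taking $Y=w_l$ recovers $f(w)$ and taking $Y=w_l+ct^\beta$ recovers $f(w+ct^\beta)$; since $vw_l=\beta_{l+1}$ and $v(ct^\beta)=\beta\ge1>\beta_{l+1}$, in both cases $vY=\beta_{l+1}$ with leading monomial $t^{\beta_{l+1}}$, so the $i$-th term of either expansion has value $c_i+i\beta_{l+1}$, where I write $c_i:=v\bigl((\partial_i f)(w)\bigr)$. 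Thus everything reduces to controlling the numbers $c_i$ and the competition between the $i=0$ term and the rest as $l\to\infty$ (so $\beta_{l+1}\nearrow1$); note $c_i\ge0$ because the $a_i$ and $w$ have nonnegative value, so only finitely many $i$ are relevant.

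The structural input I would use is that $(w_{0l})_l$ is a pseudo-Cauchy sequence --- indeed $v(w_{0,l+1}-w_{0l})=\beta_{l+1}$ increases strictly --- whose set of pseudo-limits contains both $w$ and $w+ct^\beta$, and that it is of \emph{transcendental} type (in the sense of Kaplansky's theory). This last point is precisely the quasi-finiteness of $w$: any element algebraic over $\mathrm{Frac}(\tilde{\mathcal O})=k(t^{\gamma_1},\dots,t^{\gamma_n})$ has bounded prime-to-$p$ ramification, whereas the exponents $\beta_j=1-\tfrac1{q^j}$ of $w$ involve unbounded powers of $q$ in the denominator and $q\ne p$; hence $w$, and likewise $w+ct^\beta$, is transcendental over $\tilde{\mathcal O}$. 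Granting this, for every $g\in\tilde{\mathcal O}\langle W\rangle$ the value $v(g(w_{0l}))$ is eventually constant and equal to $v(g(w))=v(g(w+ct^\beta))$ (the high-degree tail of $g$ contributes uniformly large value since $vw_{0l}=\beta_1>0$, reducing to the classical polynomial statement). Applying this to $g=f$ and to $g=\partial_i f$ simultaneously stabilizes all the $c_i$, shows $f(w+ct^\beta)\ne0$, and yields part (1) together with the equality in part (2).

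For the inequality in part (2) I would first observe that, for large $l$, the leading exponents $c_i+i\beta_{l+1}$ are pairwise distinct: an equality $c_i+i\beta_{l+1}=c_j+j\beta_{l+1}$ with $i\ne j$ pins $\beta_{l+1}$ to one fixed rational value, hence fails for all but finitely many $l$, and only finitely many $i$ matter. With no cancellation among leading terms, the expansion $f(w_{0,l+1})=\sum_{i}(\partial_i f)(w_{0l})\,t^{i\beta_{l+1}}$ gives $v(f(w_{0,l+1}))=\min_{i\ge0}(c_i+i\beta_{l+1})$, attained at a single index. Since the left-hand side is the stabilized constant $c_0$, that single index must be $0$, i.e. $c_0<c_i+i\beta_{l+1}$ for all $i\ge1$, which is part (2); substituting back makes the $i=0$ term strictly dominate in the expansion of $f(w+ct^\beta)$, finishing part (1).

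The heart of the argument is part (3). I would set $\psi(i):=c_i+i$ and choose $p^e$ to be a minimizer of $\psi$ over $\{\,i\ge1:\partial_i f\ne0\,\}$; this set is nonempty with finite minimum exactly because $f\notin\tilde{\mathcal O}$. The main obstacle is to show that every minimizer is a power of $p$. If a minimizer $i_0$ were not, then by Kummer's theorem I can write $i_0=a+b$ with $a,b\ge1$ and no carries in base $p$, so $\binom{i_0}{a}\not\equiv0\pmod p$ and $\partial_{i_0}f=\binom{i_0}{a}^{-1}\partial_a(\partial_b f)$; applying the inequality of part (2) to $\partial_b f$ (again in $\tilde{\mathcal O}\langle W\rangle$) gives $c_b<v\bigl((\partial_a\partial_b f)(w)\bigr)+a\beta_{l+1}=c_{i_0}+a\beta_{l+1}<c_{i_0}+a$, whence $\psi(b)<\psi(i_0)$, contradicting minimality. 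This is the characteristic-$p$ mechanism behind Ostrowski's lemma, and I expect it to be the genuinely hard step. Finally, the asymmetry in the $\beta$-indices secures strictness: $c_{p^e}+p^e\beta_{p^e+1}=\psi(p^e)-p^e q^{-(p^e+1)}$ lies strictly below $\min\psi=\psi(p^e)$, while for each $i\ne p^e$ the quantity $c_i+i\beta_{l+1}$ increases to $\psi(i)\ge\psi(p^e)$ (and is already large for large $i$); hence for all $l\ge l_0$ one gets $c_{p^e}+p^e\beta_{p^e+1}<c_i+i\beta_{l+1}$ uniformly, which is part (3).
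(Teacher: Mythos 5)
Your overall architecture (Hasse--Taylor expansion about the truncations $w_{0l}$, the distinct-slopes argument for non-cancellation among the $i\ge1$ terms, the no-carries decomposition $i_0=a+b$ with $\binom{i_0}{a}\not\equiv0\pmod p$ to force the minimizing index to be a $p$-power, and the $\beta_{p^e+1}$ versus $\beta_{l+1}$ asymmetry for strictness in (3)) matches the paper's proof closely. But there is a genuine gap at the step you yourself flag as ``the structural input'': the claim that $v\left(g\left(w_{0l}\right)\right)$ is eventually constant and equal to $v(g(w))$ for every $g\in\tilde{\mathcal O}\langle W\rangle$. That claim is essentially equivalent to part (1) --- it is the hard content of the proposition --- and your justification for it, namely that $(w_{0l})_l$ is a pseudo-Cauchy sequence of transcendental type \emph{because} its pseudo-limit $w$ is transcendental over ${\rm Frac}\left(\tilde{\mathcal O}\right)$, is not a valid implication. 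A pseudo-Cauchy sequence of algebraic type can perfectly well admit transcendental pseudo-limits (perturb an algebraic pseudo-limit of an immediate Artin--Schreier defect extension by a transcendental element of sufficiently large value), so transcendence of $w$ does not exclude the scenario in which $v(g(w_{0l}))$ is eventually strictly increasing for some $g$; in that scenario your ``stabilized constant $c_0$'' does not exist and parts (1) and (2) collapse.

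What closes this gap in the paper is an arithmetic argument on value groups, run inside an induction on $\deg f$: one arranges $l_0$ so that $v(f(w_{0l}))$ lies in the group $\left(\gamma_1,\dots,\gamma_n,\frac1{q^l}\right)$ (or is $\infty$) while the dominant correction term $v\left(\partial_{p^e}f(w_{0l})\right)+p^e\beta_{l+1}$ does not lie in that group (it picks up $\frac1{q^{l+1}}$); hence the two never cancel, $v\left(f\left(w+ct^\beta\right)\right)$ equals the minimum of the two, and since the correction term strictly increases with $l$ while $v\left(f\left(w+ct^\beta\right)\right)$ is one fixed element, the $i=0$ term must eventually be the strict minimum. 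This yields the stabilization, the non-vanishing $f\left(w+ct^\beta\right)\ne0$, and part (2) simultaneously. Your ``bounded prime-to-$p$ ramification'' observation is exactly the right ingredient, but you deploy it only to prove transcendence of $w$ rather than to exclude the cancellation directly. The rest of your argument (the derivation of (2) and (3) from stabilization, and the reduction to finitely many indices $i$ via $c_i\ge0$ in place of the paper's henselian Weierstrass preparation for the non-polynomial case) would go through once this step is repaired.
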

\begin{proof} First, assume that $f\in\tilde{\mathcal O}[W]$ and let $d$ denote the degree of $f$ as a polynomial in $W$. We proceed by induction on $d$, the case $d=0$ being trivial. Assume the result is true up to degree $d-1$; in particular, it holds for all the derivatives $\partial_if(W)$ of $f$ with $i\in\N_{>0}$.
	
	Fix $l_0$ sufficiently large so that the subgroup $(\gamma_1,\dots,\gamma_n)$ does not contain $\frac1{q^{l_0}}$ and so that the conclusion of the proposition holds with $f$ replaced by $\partial_if(W)$, $i\in\N_{>0}$. In particular, for all $i\in\N_{>0}$ and all $l\ge l_0$ we have
	\[
	v\left(\partial_if\left(w+ct^\beta\right)\right)=v(\partial_if(w_{0l})).
	\]
	By Taylor's formula, for every $l\in\N_{>0}$ we have 
	\[
	f\left(w+ct^\beta\right)=f(w_{0l})+\sum\limits_{i\in\N_{>0}}\partial_if(w_{0l})\left(w_l+ct^\beta\right)^i.
	\]
	\begin{remark}\label{distinctslopes} Before proceeding, we make a remark about basic properties of linear functions on the real line. Let $\ell_1(\epsilon)=a_1+b_1\epsilon$ and $\ell_2(\epsilon)=a_2+b_2\epsilon$, where $a_j,b_j\in\R$, be two linear functions in one variable. Assume that $b_1\ne b_2$. Then there exists a unique $\epsilon\in\R$ such that $\ell_1(\epsilon)=\ell_2(\epsilon)$. In particular, if $(\theta_1,\theta_2)\subset\R$ is an {\it open} interval (where we allow $\theta_1=-\infty$ and/or $\theta_2=+\infty$) then there exists $\beta\in(\theta_1,\theta_2)$ such that $\ell_1(\beta')\ne\ell_2(\beta')$ for all $\beta'\in(\beta,\theta_2)$. By the same token, if $\ell_i(\epsilon)=a_i+b_i\epsilon$, $i\in\{1,\dots,d\}$, are $d$ linear functions whose slopes $b_1,\dots,b_d$ are pairwise distinct, we have
			\[
			\#\left\{\epsilon\in\R\ \left|\ \exists i,j\in\{1,\dots,d\},i\ne j,\text{ such that
			}\ell_i(\epsilon)=\ell_j(\epsilon)\right.\right\}\le\frac{d(d-1)}2.
			\]
			In particular, if $(\theta_1,\theta_2)\subset\R$ is an open interval then there exists $\beta\in(\theta_1,\theta_2)$ such that $\ell_i(\beta')\ne\ell_j(\beta')$ whenever $i\ne j$ and $\beta'\in(\beta,\theta_2)$.
		\end{remark}
		With this in mind, we come back to the proof of the proposition.
	\medskip
	
	Consider linear functions
	$\ell_i(\epsilon):=v\left(\partial_if(w_{0l})\right)+i\epsilon$, $\epsilon\in(0,1)$, $i\in\{1,\dots,d\}$. By Remark \ref{distinctslopes}, for $\epsilon$ sufficiently close to 1, all these linear functions have distinct values. Therefore,
	increasing $l_0$, if necessary, we may assume that
	\[
	\ell_i(\beta_j)\ne\ell_{i'}(\beta_j)\ \text{ whenever }\ i\ne i'\ \text{ and }\ j\ge l_0.
	\]
	Let us prove part (3) of the proposition. In order to do this, take a $j\in\N_{>0}$ that is not a power of $p$. Then $j$ can be written as $j=j'+j''$ with $\binom j{j'}\ne0$: if $j=p^\ell u$ with $u\ge2$ not divisible by $p$, we can take $j'=p^\ell$ and $j''=(u-1)p^\ell$. We have $\binom j{j'}\partial_j=\partial_{j'}\circ\partial_{j''}$. By (2) of the proposition, applied to $\partial_{j'}f(W)$ instead of $f$, we see that
		\[
		v\left(\partial_{j'}f\left(w_{0l}\right)\right)<v\left(\partial_jf\left(w_{0l}\right)\right)+j''\beta_{l+1}.
		\]
		Adding $j'\beta_{l+1}$ to both sides, we obtain
		\[
		v\left(\partial_{j'}f\left(w_{0l}\right)\right)+j'\beta_{l+1}<v\left(\partial_jf\left(w_{0l}\right)\right)+j\beta_{l+1}.
		\]
		This proves that 
		\begin{equation}
			v\left(\partial_jf\left(w\right)\right)+j\beta_{l+1}\ne
			\min\limits_{i\in\N_{>0}}\left\{v\left(\partial_if(w)\right)+i\beta_{l+1}\right\}.\label{eq:minimumppower}
		\end{equation}
		Thus the minimum on the right hand side of \eqref{eq:minimumppower} can be attained only if $i$ is a power of $p$. Since the elements $v\left(\partial_if(w_{0l})\right)+i\beta_{l+1}$, $i\in\N_{>0}$, are pairwise distinct, this proves  part (3) of the proposition for polynomials $f$ of degree $d$.
		\medskip
		
		To prove (1) and (2) of the proposition, let $e\in\N$ be as in part (3). We have
		\begin{eqnarray*}
			v(f(w_{0l}))&\in&\left(\gamma_1,\dots,\gamma_n,\frac1{q^l}\right)\bigcup\{\infty\},\\
			v\left(\partial_{p^e}f(w_{0l})\right)&\in&\left(\gamma_1,\dots,\gamma_n,\frac1{q^l}\right)\text{ and}\\
			v\left(\partial_{p^e}f(w_{0l})\right)+p^e\beta_{l+1}&\notin&\left(\gamma_1,\dots,\gamma_n,\frac1{q^l}\right).
		\end{eqnarray*}
		Thus 
		\[
		v\left( f(w_{0l})\right)\ne v\left(\partial_{p^e}f(w_{0l})\right)+p^e\beta_{l+1},
		\]
		so
	\begin{eqnarray}
		v\left(f\left(w_l+ct^\beta\right)\right)=&v\left(f(w_{0l})+\sum\limits_{i\in\N_{>0}}\partial_if(w)\left(w_l+ct^\beta\right)^i\right)\nonumber\\
		=&\min\left\{v\left(f(w_{0l})\right),v\left(\partial_{p^e}f(w)\right)+p^e\beta_{l+1}\right\}.\label{eq:taylor}
	\end{eqnarray}
In particular $v(f(w_l+ct^\beta))\ne\infty$, in other words, $f(w_l+ct^\beta)\ne0$. Now, the sequence
		$(v\left(\partial_{p^e}f(w_{0l})\right)+p^e\beta_{l+1})_l$ is strictly increasing with $l$ whereas $v\left(f\left(w+ct^\beta\right)\right)$ is independent of $l$. Both (1) and (2) of the proposition follow from this.
	
	Next, drop the assumption that $f$ is a polynomial in $W$. Take the smallest $d\in\N$ such that $va_d\le va_i$ for all $i\in\N$. Replacing $f$ by $\frac f{a_i}$ does not change the problem, so we may assume that $a_i=1$. By the henselian Weierstrass Preparation Theorem \cite[Proposition 3.1.2]{BC}, we can write $f=u\tilde f$, where $\tilde f$ is a polynomial of degree $d$ with coefficients in the maximal ideal of $\tilde{\mathcal O}\langle W\rangle$  and $u$ is a unit of $\tilde{\mathcal O}\langle W\rangle$. Since the proposition is already known for $\tilde f$, we obtain that $f\left(w+ct^\beta\right)\ne0$ and there exists $l_0\in\N$ such that for all $l\ge l_0$ we have $v(f\left(w+ct^\beta\right))=v(f\left(w_{0l}\right))$. Take an integer $i\in\N_{>0}$ such that
		\begin{equation}
		a_{ i}\ne0,\label{eq:non0derivative}
		\end{equation}
		so that $\partial_if(W)\ne0$. Applying the above reasoning to $\partial_if$ instead of $f$, we obtain that
		$\partial_if\left(w+ct^\beta\right)\ne0$ and there exists $l_0\in\N$ such that
		\begin{equation}
			\forall l\ge l_0\text{ we have }v\left(\partial_if\left(w+ct^\beta\right)\right)=v\left(\partial_if\left(w_{0l}\right)\right).\label{eq:dfistabilizes}
		\end{equation}
		Fix an $i\in\N$ satisfying \eqref{eq:non0derivative} and an $l_0\in\N$ satisfying \eqref{eq:dfistabilizes}. Let $i_1\in\N$ be sufficiently large so that
		\begin{equation}
			(i_1-i)\beta_1>v\left(\partial_if\left(w+ct^\beta\right)\right).\label{eq:i1verylarge}
		\end{equation}
		For $i'\in\N_{>0}$, $\partial_{i'}f(W)$ is a formal power series over $\tilde{\mathcal O}$, so $v\left(\partial_{i'}f(w_{0l})\right)\ge0$. Thus the inequality \eqref{eq:i1verylarge} implies that for every $i'>i_1$ and every $l\ge l_0$ we have
\[
v\left(\partial_if(w_{0l})\right)+i\beta_{l+1}<v\left(\partial_{i'}f(w_{0l})\right)+i'\beta_{l+1}.
\]
In particular,
		\[
		\min\limits_{i'\in\N_{>0}}\left\{v\left(\partial_{i'}f(w_{0l})\right)+i'\beta_{l+1}\right\}
		\]
		cannot be attained for any $i>i_1$.
		
		Now (1)--(3) of the proposition are proved exactly as in the polynomial case, except that at every step we restrict attention to the quantities
		\[
		v\left(\partial_{i'}f(w_{0l})\right)+i'\beta_{l+1}\quad\text{ for }\quad i'\le i_1.
		\]
	 This completes the proof.
\end{proof}

\begin{remark}\label{perturbation} The above  proposition holds if we replace $w$ in (1), (2) and (3) by $w_r$ for some $r\in\N$. The same proof applies, where, naturally, we take $l>l_0\ge r$.
\end{remark}
Let $s=w^{\frac1p}$.
\begin{definition} An element $y\in k\left(\left(t^\Gamma\right)\right)$ is said to be $w${\bf-quasi-finite} if there exist
	$\gamma\in\Gamma_{\le0}$, $n\in\N$, elements $h_1,\dots,h_n\in k\left(t^\Gamma\right)[w]$ having strictly positive values and a formal power series $g$ in $n$ variables over $k$ such that $y=t^\gamma g\left(h_1,\dots,h_n\right)$.
	Similarly, consider an ordered abelian group $\Gamma^*\supset\frac1p\Gamma$, an element $\beta\in\Gamma^*$,
	$\beta>\frac1p$, and a constant $c\in k$. An element $y\in k\left(\left(t^{\Gamma^*}\right)\right)$ is said to be
	$(s+ ct^\beta)${\bf-quasi-finite} if there exist $\gamma\in\frac1p\Gamma_{\le0}$, $n\in\N$, elements
	$h_1,\dots,h_n\in k\left(t^{\frac1p\Gamma}\right)[s+ ct^\beta]$ having strictly positive values and a formal power series $g$ in $n$ variables over $k$ such that $y=t^\gamma g\left(h_1,\dots,h_n\right)$.
\end{definition}
\begin{example} Every element $y\in k\left(t^\Gamma,w\right)$ is $w$-quasi-finite. For example, let us write $w^{-1}$ as a $w$-quasi-finite element. We have
	\begin{align*}
		\frac{1}{w}&=\frac{1}{t^{\beta_1}(1+t^{\beta_2-\beta_1}+t^{\beta_3-\beta_1}+\dots)} \\
		&=\frac{1}{t^{\beta_1}(1+z)}, \text{ where } z=t^{-\beta_1}w-1=t^{-\beta_1}w_{2}\\
		&= t^{-\beta_1}(1-z+z^2-z^3+\ldots)
	\end{align*}
	Hence,  we take $\gamma=-\beta_1$, $h_1=z\in k\left(t^\Gamma\right)[w]$ and $g(X)=\sum\limits_{n=0}^\infty (-1)^nX^n$ and we see that $w^{-1}=t^\gamma g(h_1)$.
\end{example}

\begin{remark}\label{quasifinitepowerseries} Let $h$ be a $w$-quasi-finite element. Then there exist
	$r\in\N_{>0}$, $n\in\N$, elements $\gamma_1,\gamma_2,\dots,\gamma_n\in\Gamma_{>0}$, $\gamma\in\Gamma_{\ge0}$, $\beta\in\Gamma$ with $0\le\beta<\beta_{r+1}$ such that $t^\gamma h\in
	k\left[t^{\gamma_1},\dots,t^{\gamma_n}\right]\left[\left[\frac{w_r}{t^\beta}\right]\right]$. For example, for $h=w^{-1}$ we can take $r=2$, $n=0$, $\gamma=\beta=\beta_1$ and $g(X)=\sum\limits_{n=0}^\infty (-1)^nX^n$, as above. We have
\[
t^\gamma w^{-1}=g\left(\frac{w_2}{t^{\beta_1}}\right)\in k\left[\left[\frac{w_2}{t^{\beta_1}}\right]\right].
\]
\end{remark}

Let $K=k\left\langle t^\Gamma,w\right\rangle$ denote the henselization of $k\left(t^\Gamma,w\right)$. 

\begin{remark}
		Recall that $K$ is the smallest henselian extension of $k\left(t^\Gamma,w\right)$, that is, the smallest extension whose valuation ring $\mathcal O_K$ satisfies Hensel's lemma (every polynomial over $\mathcal O_K$ having a root modulo $\mathfrak m_K$ has a root in $\mathcal O_K$). It is known  that rank 1 complete valued fields are henselian, hence $K\subset k\left(\left(t^\Gamma\right)\right)$. The henselization is obtained by adjoining all the elements $y$ of $k\left(\left(t^\Gamma\right)\right)$ that satisfy an equation $f$ over $k\left(t^\Gamma,w\right)$ such that $yv$ is a simple root of the reduction $\bar f$ of $f$ modulo $\mathfrak m_K$. Conversely, every element $y\in K$ with $vy=0$ satisfies an equation $f$ as above. It is also known that henselization coincides with the relative {\it separable} algebraic closure of $k\left(t^\Gamma,w\right)$ inside $k\left(\left(t^\Gamma\right)\right)$ when  $\text{rk}\ \Gamma=1$.
\end{remark}

\begin{lemma}\label{henselizationquasifinite} 
Every element $y\in K$ is $w$-quasi-finite.
\end{lemma}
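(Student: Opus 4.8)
The plan is to let $Q\subseteq k\left(\left(t^\Gamma\right)\right)$ denote the set of all $w$-quasi-finite elements and to prove the inclusion $K\subseteq Q$. We already know from the Example above that $k\left(t^\Gamma,w\right)\subseteq Q$. I would then invoke the explicit description of the henselization recalled above: every $y\in K$ with $vy=0$ satisfies an equation $f=\sum_{i=0}^d a_iW^i$ whose coefficients $a_i$ lie in the valuation ring of $k\left(t^\Gamma,w\right)$ and for which $c:=yv\in k$ is a \emph{simple} root of the residue polynomial $\bar f\in k[W]$; moreover $y$ is the unique root of $f$ in $k\left(\left(t^\Gamma\right)\right)$ reducing to $c$. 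The passage from value $0$ to an arbitrary $y\in K$ is routine: if $\delta=vy$, then $t^{-\delta}y\in K$ has value $0$, and since $t^\delta\in Q$ for every $\delta\in\Gamma$ and the product of $t^\delta$ with a value-$0$ quasi-finite element is again quasi-finite (adjoin $t^\delta$ as an extra positive-value generator when $\delta>0$), it suffices to treat the value-$0$ case. Thus the whole statement reduces to showing that such a root $y$ is $w$-quasi-finite.

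Next I would exhibit $y$ in quasi-finite form by a formal Newton iteration. Writing $y=c+z$ with $vz>0$ and Hasse-expanding, the equation becomes $\sum_{i\ge0}\partial_if(c)\,z^i=0$. Because $c$ is a simple residual root, $v\!\left(\partial_1f(c)\right)=0$, while $v\!\left(\partial_0f(c)\right)=v(f(c))>0$ and $v\!\left(\partial_if(c)\right)\ge0$ for $i\ge2$. Setting $u:=\partial_0f(c)/\partial_1f(c)$, of strictly positive value, and substituting $z=u\zeta$ and then $\zeta=-1+\eta$, turns the equation into a fixed-point equation for $\eta$ in which every term carries a factor $\tilde b_i:=\frac{\partial_if(c)}{\partial_1f(c)}u^{i-1}$, $2\le i\le d$, of strictly positive value (the sum over $i$ is finite since $\deg f=d$). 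The formal implicit function theorem solves this uniquely as $\eta=H\!\left(\tilde b_2,\dots,\tilde b_d\right)$ for a power series $H$ over $k$ with $H(0)=0$, whence $y=c+\tilde G\!\left(u,\tilde b_2,\dots,\tilde b_d\right)$ for a power series $\tilde G$ over $k$ with zero constant term, evaluated at the positive-value elements $u,\tilde b_2,\dots,\tilde b_d\in k\left(t^\Gamma,w\right)$. By uniqueness of the root reducing to $c$, the element so produced is exactly $y$.

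It then remains to prove the key closure property: substituting finitely many elements of $k\left(t^\Gamma,w\right)$ of strictly positive value into a power series over $k$ yields a $w$-quasi-finite element. This is the heart of the matter, and the step I expect to be the main obstacle. The plan here is to use Remark \ref{quasifinitepowerseries} (together with Remark \ref{perturbation}) to place all the arguments over a \emph{common} finite coordinate system: after enlarging a common index $r$, each argument, having non-negative value, can be written \emph{without any pole factor} as a power series in finitely many positive-value generators $t^{\gamma_1},\dots,t^{\gamma_n}\in k\left(t^\Gamma\right)$ and a single tail generator $\tau=\frac{w_r}{t^\beta}\in k\left(t^\Gamma\right)[w]$ with $0\le\beta<\beta_{r+1}$; the denominators that occur are inverted by geometric series once $r$ is large enough that the minimal-value term of each denominator is its constant term. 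Composing $\tilde G$ with these expansions then gives a single power series in $t^{\gamma_1},\dots,t^{\gamma_n},\tau$ with zero constant term, that is, a $w$-quasi-finite element (with $\gamma=0$). The one genuine subtlety is convergence: one must verify that this formal composition defines an element of $k\left(\left(t^\Gamma\right)\right)$, and here the strict positivity of all the generator-values is exactly what guarantees that the resulting family is summable and its support well-ordered. Flattening the nested generalized power series into a single series over finitely many elements of $k\left(t^\Gamma\right)[w]$, while keeping control of the supports, is the technical crux; the remaining manipulations are bookkeeping.
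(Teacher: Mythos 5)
Your proposal is correct and follows essentially the same route as the paper: reduce to $vy=0$, use the description of elements of the henselization as simple residual roots of polynomials over $k\left(t^\Gamma,w\right)$, and solve the equation formally to express $y$ as a power series over $k$ in finitely many positive-value elements of $k\left(t^\Gamma,w\right)$, which are themselves $w$-quasi-finite. The only differences are cosmetic: the paper extracts the power series via a recursion coming from the decomposition $f=(Y-a)h(Y)+r(Y)$ rather than your Newton/implicit-function iteration, and it leaves the final closure-under-composition step (which you rightly flag as the technical crux) implicit.
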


\begin{proof} Multiplying $y$ by an element of the form $t^\gamma$ does not change the problem, so we may assume that $vy=0$ and apply the above description of $y$ as a root of an equation $f$ with certain properties. Let us write $f$ as $f=(Y-a)h(Y)+r(Y)$, where $va=0$, $h(Y)$ is a non-zero polynomial over $k\left(t^\Gamma,w\right)$ all of whose non-zero coefficients have value 0, the reduction $\bar h$ of $h$ modulo
	$\mathfrak m_K$ is not divisible by $Y-av$ and
	\[
	r(Y)=\sum\limits_{j=0}^mr_jY^j\text{ with }vr_j>0\text{ for all }j\in\{0,\dots,m\}.
	\]
	Write $h(Y)=(Y-a)\sum\limits_{j=0}^eh_jY^j+c$ with $c\in k\left(t^\Gamma,w\right)$, $vc=0$. Finally, write $a=\bar
	a+\tilde a$, $c=\bar c+\tilde c$ and $h_j=\bar h_j+\tilde h_j$ where $\bar a,\bar c,\bar h_j\in k$ and $\tilde a,\tilde c,\tilde h_j$ have strictly positive values. Then the equation $f(y)=0$ yields a recursive procedure for writing $y$ as a formal power series over $k$ in $\tilde a,\tilde c,\tilde h_0,\dots,\tilde h_a$ and $r_0,\dots,r_m$. Now, $\tilde a,\tilde c,\tilde
	h_0,\dots,\tilde h_a$ and $r_0,\dots,r_m$ are elements of $k\left(t^\Gamma,w\right)$ and hence are themselves
	$w$-quasi-finite; therefore, so is $y$, as desired.
	\end{proof}
\medskip

\begin{corollary}\label{coryEqualsum} Every element $y\in K$ can be written in the form
\begin{equation}
y=\sum\limits_{(i,j)\in\N^2}c_{ij}t^{\epsilon_i}\left(\frac{w_r}{t^\beta}\right)^j,\quad
\epsilon_i\in\left(\sum\limits_{i=1}^n\N\gamma_i\right)-\gamma,\quad c_{ij}\in k\label{eq:bounded->finite}
\end{equation}
(where $\gamma$, $\gamma_i$ and $\beta$ are as in Remark \ref{quasifinitepowerseries}).
\end{corollary}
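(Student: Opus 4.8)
The plan is simply to unwind the two preceding results. First I would invoke Lemma \ref{henselizationquasifinite} to conclude that every $y\in K$ is $w$-quasi-finite, so that Remark \ref{quasifinitepowerseries} applies and furnishes $r\in\N_{>0}$, $n\in\N$, elements $\gamma_1,\dots,\gamma_n\in\Gamma_{>0}$, $\gamma\in\Gamma_{\ge0}$ and $\beta\in\Gamma$ with $0\le\beta<\beta_{r+1}$ such that
\[
t^\gamma y\in k\left[t^{\gamma_1},\dots,t^{\gamma_n}\right]\left[\left[\frac{w_r}{t^\beta}\right]\right].
\]
This membership is precisely the statement that $t^\gamma y$ is a formal power series in the single quantity $\frac{w_r}{t^\beta}$ whose coefficients are polynomials in $t^{\gamma_1},\dots,t^{\gamma_n}$ over $k$; that is, $t^\gamma y=\sum_{j\in\N}P_j\left(\frac{w_r}{t^\beta}\right)^j$ with each $P_j\in k\left[t^{\gamma_1},\dots,t^{\gamma_n}\right]$.

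Next I would expand each coefficient $P_j$ into monomials. Since $P_j$ is a polynomial in the $t^{\gamma_l}$ with coefficients in $k$, it is a finite $k$-linear combination of monomials $t^{m_1\gamma_1+\dots+m_n\gamma_n}$ with $(m_1,\dots,m_n)\in\N^n$; in particular every exponent occurring lies in the monoid $\sum_{i=1}^n\N\gamma_i$. Enumerating this countable set as $\{\delta_i\}_{i\in\N}$, I would write $P_j=\sum_{i\in\N}c_{ij}t^{\delta_i}$ with $c_{ij}\in k$, only finitely many of them nonzero for each fixed $j$. Substituting and multiplying through by $t^{-\gamma}$ then gives
\[
y=\sum_{(i,j)\in\N^2}c_{ij}t^{\delta_i-\gamma}\left(\frac{w_r}{t^\beta}\right)^j,
\]
and setting $\epsilon_i:=\delta_i-\gamma\in\left(\sum_{i=1}^n\N\gamma_i\right)-\gamma$ yields exactly the required form.

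The only point demanding any care is that the displayed double sum is a legitimate element of $k\left(\left(t^\Gamma\right)\right)$, i.e.\ that its support is well-ordered and each value is attained by only finitely many terms. This is automatic: the monomial $t^{\epsilon_i}\left(\frac{w_r}{t^\beta}\right)^j$ has value $\epsilon_i+j\left(\beta_{r+1}-\beta\right)$, and since $v(w_r)=\beta_{r+1}$ and $\beta<\beta_{r+1}$ we have $v\left(\frac{w_r}{t^\beta}\right)=\beta_{r+1}-\beta>0$, so the $j$-th layer has value tending to $+\infty$ with $j$; combined with the well-orderedness of the exponent monoid $\sum_{i=1}^n\N\gamma_i$ (a finitely generated submonoid of the positive cone of $\Gamma\subseteq\Q$), this guarantees summability. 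Equivalently, summability is already encoded in the ring membership of the previous paragraph, since $k\left[t^{\gamma_1},\dots,t^{\gamma_n}\right]\left[\left[\frac{w_r}{t^\beta}\right]\right]$ embeds into $k\left(\left(t^\Gamma\right)\right)$ precisely because $v\left(\frac{w_r}{t^\beta}\right)>0$. I therefore expect no genuine obstacle beyond this indexing bookkeeping, as the statement is essentially a term-by-term reformulation of Remark \ref{quasifinitepowerseries}.
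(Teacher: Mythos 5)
Your argument is correct and is exactly the derivation the paper intends: the corollary is stated without proof precisely because it is the immediate combination of Lemma \ref{henselizationquasifinite} (every $y\in K$ is $w$-quasi-finite) with Remark \ref{quasifinitepowerseries}, followed by the monomial expansion and summability bookkeeping you carry out. Nothing is missing.
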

\begin{remark}\label{bounded->finite} Let the notation be as in the above remark and take an element
$\epsilon\in\Gamma$. Then $\#\left\{(i,j)\in\N^2\ \left|\
\epsilon_i+jv\left(\frac{w_r}{t^\beta}\right)<\epsilon\right.\right\}<\infty$. In other words, the set of terms on the right hand side of \eqref{eq:bounded->finite} whose value is bounded above  by any given element $\epsilon\in\Gamma$ is a finite set. This fact will be used in our proof of the main theorem.
\end{remark}

Keep the above notation. Assume, in addition, that $k$ is algebraically closed. Take $\Gamma$ to be the group consisting of all the rational numbers whose denominators are not divisible by $p$; in particular, $\frac1p\notin\Gamma$.

\begin{theorem}\label{teoEx}
	The field $K=k\left\langle t^\Gamma,w\right\rangle$ is  an algebraically maximal field that is not simply defectless. 
\end{theorem}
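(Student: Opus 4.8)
The plan is to prove the two halves of the statement separately: that $K$ has no proper algebraic immediate extension, and that it nonetheless carries a \emph{simple} extension of nontrivial defect. For algebraic maximality I would work inside $k((t^\Gamma))$. Because $vK=\Gamma$ and $Kv=k$, the Hahn field $k((t^\Gamma))$ is an immediate extension of $K$, and as a field of generalized power series it is maximally complete; hence it is a maximal immediate extension of $K$ and every immediate algebraic extension of $K$ embeds into it over $K$. It therefore suffices to show that $K$ is relatively algebraically closed in $k((t^\Gamma))$: if $y\in k((t^\Gamma))$ is algebraic over $K$ then $K(y)/K$ is automatically immediate, so once every such $y$ is shown to lie in $K$ there can be no proper algebraic immediate extension.

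To force $y\in K$ I would use the criterion recalled after \cite{BK}, namely that $K$ is algebraically maximal iff $\{v(y-a)\mid a\in K\}$ has a maximum for every algebraic $y$. Writing the minimal polynomial $f$ of $y$ over $K$ and expanding along the truncations $w_{0l}$ of $w$, Proposition \ref{truncation} supplies exactly the estimates needed: the stabilization $v(f(w+ct^\beta))=v(f(w_{0l}))$ for $l\gg0$ and the fact that the defining minimum is attained at a single power $p^e$ and is strictly increasing in $l$. These show that the approximations of $y$ by elements of $K$ cannot improve indefinitely, so the maximum is attained; the series description of the elements of $K$ in Corollary \ref{coryEqualsum}, together with the finiteness in Remark \ref{bounded->finite}, then identifies the optimal approximant as the required limit in $K$.

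For the second half I would exhibit a simple defect extension as a tower of total degree $p^2$. Set $s=w^{1/p}$; since $\beta_1\notin p\Gamma$, i.e. $vs=\tfrac1p\beta_1\notin\Gamma$, the polynomial $X^p-w$ is irreducible over $K$, so $K_1:=K(s)$ is purely inseparable of degree $p$ with $e(K_1/K)=p$, $f=1$ and $d=1$ — ramified but defectless. The key contrast is that $K_1$, unlike $K$, is \emph{not} algebraically maximal: the value $\tfrac1p$ now present makes $s$ the pseudo-limit of an algebraic-type pseudo-Cauchy sequence with no limit in $K_1$. This is precisely what the notion of an $(s+ct^\beta)$-quasi-finite element is built to detect, and Proposition \ref{truncation} in the perturbed form of Remark \ref{perturbation} yields an immediate — hence defect — degree-$p$ extension $L/K_1$. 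Then $[L:K]=p^2$ with $e(L/K)=p$, $f(L/K)=1$ and $d(L/K)=d(L/K_1)=p$. After verifying that $L$ is generated over $K$ by a single element $\theta$, so that $L/K$ is a genuine simple extension, we get $d(L/K)=p>1$ and conclude that $K$ is not simply defectless. There is no clash with the algebraic maximality of $K$, since $e(L/K)f(L/K)=p\ne1$ means $L/K$ is not immediate.

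The principal difficulty is the algebraic maximality half: one must rule out \emph{every} unfillable algebraic-type pseudo-Cauchy sequence over $K$, which reduces to bounding the value of an arbitrary polynomial evaluated along the truncations $w_{0l}$, uniformly in the polynomial. This uniform control is the whole point of Proposition \ref{truncation}, and its delicate ingredient is the distinct-slopes/Newton-polygon analysis of Remark \ref{distinctslopes}, ensuring that the relevant minimum is attained at a single power of $p$ and grows with $l$. A secondary obstacle appears in the second half: one must check that passing to the ramified extension $K_1=K(s)$ really does destroy algebraic maximality — that the sequence converging to $s$ is of algebraic type over $K_1$ and cannot be filled there — and then produce the primitive element $\theta$ for $L/K$, simplicity not being automatic because $K_1/K$ is inseparable.
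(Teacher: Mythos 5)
Your overall architecture (maximality via degree-$p$ immediate extensions being impossible, non-simple-defectlessness via a degree-$p^2$ tower with a ramified bottom and an immediate top) matches the paper in spirit, but both halves of your argument have genuine gaps. For algebraic maximality, your reduction to ``$K$ is relatively algebraically closed in $k\left(\left(t^\Gamma\right)\right)$'' rests on the claim that every immediate algebraic extension of $K$ embeds over $K$ into the Hahn field. That embedding statement is Kaplansky's uniqueness theorem for maximal immediate extensions and requires his hypothesis A, in particular $p$-divisibility of the value group --- which fails here by design, since $\frac1p\notin\Gamma$. So the reduction is unjustified precisely in the regime the example is built to probe. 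Moreover, even granting the reduction, your appeal to Proposition \ref{truncation} is not to the point: that proposition controls $v\left(f\left(w+ct^\beta\right)\right)$ versus $v\left(f\left(w_{0l}\right)\right)$, not the set $\{v(y-a)\mid a\in K\}$ for a root $y$ of $f$. The paper instead uses the standard fact that a henselian field of residue characteristic $p$ is algebraically maximal iff it has no immediate extension of degree $p$ of Artin--Schreier or purely inseparable type, and then kills each type by an explicit computation: Lemma \ref{subtractppowers} (subtract a $p$-th power to push the value out of $p\Gamma$) for the inseparable case, and a finite induction on the invariant $n(b)$ of the support $\Delta(b)$ for the Artin--Schreier case. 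Neither of these arguments, nor any substitute for them, appears in your sketch.

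For the defect half, the decisive object is missing: you never write down $L$. The paper takes $L=K[X]/\left(X^{p^2}-t^{p+1}-w^p\right)$, which is simple over $K$ by construction; its degree-$p$ subfield is $K'=K\left(t^{\frac1p}\right)$ (ramified, defectless), and the entire technical content is the proof that $L|K'$ is immediate, via quasi-finiteness, Corollary \ref{coryEqualsum} and Proposition \ref{truncation}. In your version the intermediate field is $K_1=K\left(w^{\frac1p}\right)$ and the immediate top extension is asserted to exist ``by Remark \ref{perturbation}'' without being exhibited; and the simplicity of the resulting $L|K$ is exactly the kind of thing that can fail for towers of purely inseparable extensions in characteristic $p$ (compare $\F_p(u,v)\left(u^{\frac1p},v^{\frac1p}\right)$), so deferring it leaves the theorem unproved --- the whole point is to produce a \emph{simple} defect extension. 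A smaller but telling slip: your justification that $K_1|K$ is ramified via $vs=\frac{\beta_1}p\notin\Gamma$ is false for some admissible $(p,q)$, e.g.\ $p=2$, $q=3$ gives $\frac{\beta_1}p=\frac13\in\Gamma$; one must look further along the expansion of $w^{\frac1p}$ to see the ramification. In short, the skeleton is recognizable but the two load-bearing arguments --- the degree-$p$ case analysis for maximality and the explicit simple generator with the immediacy computation --- are absent.
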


In order to prove that the field $K$ is algebraically maximal, we will use the following lemma.

	\begin{lemma}\label{subtractppowers} Take an element
		\[
		z\in K\setminus K^p.
		\]
		There exists $a\in k\left(t^\Gamma\right)[w]\subset K$ such that $v(z-a^p)\notin p\Gamma$.
\end{lemma}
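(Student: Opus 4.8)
The goal is to show that any $z \in K \setminus K^p$ can be approximated by a $p$-th power $a^p$ with $a \in k(t^\Gamma)[w]$ in such a way that the error value $v(z-a^p)$ escapes the subgroup $p\Gamma$. Since $k$ is algebraically closed of characteristic $p$, the Frobenius $x \mapsto x^p$ is a bijection on $k$, so extracting $p$-th roots of residues causes no trouble; the obstruction to $z$ being a $p$-th power in $K$ lives entirely in the value group and in the ``shape'' of $z$ as a generalized power series. The strategy is to exploit Corollary \ref{coryEqualsum}: write $z$ as a generalized power series of the form $\sum_{(i,j)} c_{ij} t^{\epsilon_i}(w_r/t^\beta)^j$, and then try to build a $p$-th root term by term, stopping precisely at the first term where the construction fails because a required exponent or a required $p$-th root falls outside $k(t^\Gamma)[w]$.

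\textbf{Key steps.} First I would reduce to the case $vz = 0$: multiplying $z$ by $t^{p\delta}$ for a suitable $\delta \in \Gamma$ changes $v(z-a^p)$ only by an element of $p\Gamma$, so it does not affect whether $v(z-a^p) \in p\Gamma$; one must only check that $vz \in p\Gamma$ (if $vz \notin p\Gamma$ one may take $a=0$ and be done immediately, since then $v(z) = v(z - 0^p) \notin p\Gamma$). So assume $vz \in p\Gamma$, and after scaling $vz = 0$, with leading coefficient some nonzero element of $k$. Next I would construct $a \in k(t^\Gamma)[w]$ greedily: using the power-series expansion, I successively choose the terms of $a$ so that the lowest-order terms of $a^p$ match those of $z$. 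At each stage the new term of $a^p$ is governed by Frobenius on the coefficient (available since $k$ is perfect) together with a monomial $t^\delta$ whose exponent must satisfy $p\delta = \epsilon_i$, i.e. $\delta = \epsilon_i/p$, together with a contribution from the $w$-part. The construction proceeds as long as the exponents $\epsilon_i$ occurring in $z$ lie in $p\Gamma$ and the corresponding $p$-th root stays inside $k(t^\Gamma)[w]$. The crucial point is that $z \notin K^p$ forces the process to halt: there is some term of $z$ whose exponent $\epsilon$ is not in $p\Gamma$ (so $\epsilon/p \notin \Gamma$), and at that stage the partial sum $a$ constructed so far satisfies $v(z - a^p) = \epsilon \notin p\Gamma$.

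\textbf{The main obstacle.} The delicate part is controlling the $w$-direction of the expansion and verifying that the partial $p$-th root $a$ genuinely lands in $k(t^\Gamma)[w]$ and not merely in the larger field $K$ or in $k((t^\Gamma))$. The generalized power series for $z$ involves powers of $w_r/t^\beta$, and raising a sum involving $w$ to the $p$-th power interacts with the relation $s^p = w$ and with the characteristic-$p$ freshman's dream $(\sum b_j)^p = \sum b_j^p$; I must ensure that the pieces of the $p$-th root needed to match the $w$-part of $z$ are polynomial in $w$ over $k(t^\Gamma)$ rather than requiring a genuine $p$-th root of $w$ itself (which would be $s \notin K$). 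Here the hypothesis $z \in K$ (so $z$ is $w$-quasi-finite by Lemma \ref{henselizationquasifinite}) should guarantee that the $w_r/t^\beta$-expansion has coefficients whose $p$-th roots remain available in $k(t^\Gamma)[w]$, and the finiteness statement of Remark \ref{bounded->finite} ensures the greedy construction is well-defined up to any bounded value. The heart of the argument is thus to pinpoint, using $z \notin K^p$ together with the explicit normal form, the first exponent $\epsilon_i \notin p\Gamma$, and to show that truncating $a$ just before that term yields exactly $v(z - a^p) = \epsilon_i \notin p\Gamma$.
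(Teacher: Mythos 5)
Your setup (the normal form of Corollary \ref{coryEqualsum}, subtracting $p$-th powers of whatever terms admit roots in $k\left(t^\Gamma\right)[w]$, reducing to the case $vz\in p\Gamma$) matches the paper's, and you correctly flag where the difficulty lies; but the step you lean on to finish --- ``$z\notin K^p$ forces the process to halt: there is some term of $z$ whose exponent $\epsilon$ is not in $p\Gamma$'' --- is precisely the point that needs proof, and as stated it is not the right dichotomy. The hard case is the one where \emph{every} exponent in the $t$-adic support of $z$ lies in $p\Gamma$: then $z$ is a $p$-th power in the ambient field $k\left(\left(t^\Gamma\right)\right)$ (divide all exponents by $p$ and apply the inverse of Frobenius to the coefficients), yet it need not be a $p$-th power in $K$, because that root need not lie in the henselization. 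In this situation your greedy scan never meets a ``first bad exponent''; the partial sums of the putative root form an infinite series converging to an element outside $k\left(t^\Gamma\right)[w]$ (morally, to something like $s=w^{1/p}$), and no finite truncation of it certifies anything about $v(z-a^p)$. So the termination claim is a genuine gap, not a technicality you can defer.

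The paper's proof does something materially different at exactly this point. It works with the \emph{structured} monomials $t^{\epsilon_i}\left(\frac{w_r}{t^\beta}\right)^j$ rather than with the raw $t$-adic support: the divisibility condition it imposes is $p\nmid(\epsilon_i,j)$ on the \emph{pair} of exponents (a condition on the representation, under which the removable terms are honest $p$-th powers of elements of $k\left(t^\Gamma\right)[w]$, since $w_r\in k\left(t^\Gamma\right)[w]$), and only finitely many terms are relevant by Remark \ref{bounded->finite} together with the bound $i_1$. The value $v(z-a^p)$ is then computed not by reading off a surviving exponent of $z$ but by invoking Proposition \ref{truncation}: the value of the remainder is pinned to the exponent of the distinguished term $\partial_{p^e}f(\cdot)\left(\frac{w_r}{t^\beta}\right)^{p^e}$, which involves the tail exponent $\beta_{r+1}\notin\frac1{q^r}\Z$ and, combined with the non-divisibility condition, is shown to escape $p\Gamma$. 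This stabilization-of-values mechanism --- the fact that elements of $K$ cannot resolve the accumulation point of the exponents $\beta_j$, so that $v(z-a^p)$ is forced onto an exponent created by the infinite tail rather than one visible in any finite truncation --- is the essential input your argument is missing; without it, or a substitute for it, the proof does not go through.
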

\begin{proof} The element $z$ is $w$-quasi-finite; apply Remark \ref  {quasifinitepowerseries} to $z$. Let $r\in\N_{>0}$, $n\in\N$, elements
		$\gamma_1,\gamma_2,\dots,\gamma_n\in\Gamma_{>0}$, $\gamma\in\Gamma_{\ge0}$, $\beta\in\Gamma$ with $0\le\beta<\beta_{r+1}$ and
		\[
		f\in k\left[t^{\gamma_1},\dots,t^{\gamma_n}\right]\left[\left[W\right]\right]
		\]
		be such that 
		\[
		t^\gamma z=f\left(\frac{w_r}{t^\beta}\right),
		\]
		as in Remark \ref {quasifinitepowerseries}. Multiplying $z$ by an element of $K^p$ does not change the problem, so we may assume that $\gamma=0$. Let $\Gamma_0$ denote the subgroup of $\Gamma$ generated by
		$\gamma_1,\dots,\gamma_n$. Write $z$ as
		\begin{equation}
			z=f\left(\frac{w_r}{t^\beta}\right)=\sum\limits_{(i,j)\in S}b_{ij}t^{\epsilon_i}\left(\frac{w_r}{t^\beta}\right)^j,\quad b_{ij}\in k,\epsilon_i\in(\Gamma_0)_{\ge0},S\subset\N^2.\label{eq:monomialexpansion}
		\end{equation}
		First, assume that
		\begin{equation}
			f\in k\left[t^{\gamma_1},\dots,t^{\gamma_n}\right]\left[W\right],\label{eq:assumefpolyn}
		\end{equation}
		so that the set $S$ in \eqref{eq:monomialexpansion} is finite. Subtracting an element of $\left(k\left(t^\Gamma\right)[w]\right)^p\subset K^p$ from $z$, if necessary, we may assume that
	\begin{equation}
		p\ \nmid\ (\epsilon_i,j)\ \text{ for all} \ (i,j)\in S.\label{eq:pdoesnotdivide}
	\end{equation}
	Apply Proposition \ref{truncation} to the expression \eqref{eq:monomialexpansion}, with $\frac{w_r}{t^\beta}$ playing the role of $w+ct^\beta$. Increasing $r$, if necessary, assume that the conclusion of Proposition \ref{truncation} holds with $l_0=r$ and that
		\begin{equation}
			\gamma_1,\dots,\gamma_n,\beta\in\frac1{q^r}\Z.\label{eq:in1ql0}
		\end{equation}
		Let $e$ be as in Proposition \ref{truncation} (3). Since $\beta_{r+1}\notin\frac1{q^r}\Z$ and in view of \eqref{eq:in1ql0}, we see that the initial monomial $t^\gamma$ of
		$\left(\partial_{p^e}f\left(\frac{w_{0r}}{t^\beta}\right)\right)\left(\frac{w_r}{t^\beta}\right)^{p^e}$ cannot be canceled by any of the monomials appearing in $f\left(\frac{w_{0r}}{t^\beta}\right)$. By \eqref{eq:pdoesnotdivide}, we have
		$\gamma\notin p\Gamma$. Let $b$ denote the sum of all the monomials in $t$ appearing in
		$f\left(\frac{w_{0r}}{t^\beta}\right)$ whose exponents are divisible by $p$ and put $a=b^{\frac1p}$. Then
	$a\in k\left(t^\Gamma\right)$ and $v(z-a^p)\notin pvK$, as desired.
		
		Next, drop the assumption \eqref{eq:assumefpolyn}, so that the set $S$ is allowed to be infinite in \eqref{eq:monomialexpansion}. Let $i_1\in\N$ be such that
		$\left(i_1-p^e\right)(\beta_1-\beta)>\partial_{p^e}f\left(\frac{w_{0r}}{t^\beta}\right)$, so that
\[
\min\limits_{i'\in\N_{>0}}\left\{v\left(\partial_{i'}f(w_{0r})\right)+i'(\beta_{r+1}-\beta)\right\}
\]
cannot be attained for any $i'>i_1$. Now the above proof of the lemma applies verbatim, except that statement \eqref{eq:pdoesnotdivide} should be modified to say
		\begin{equation}
			p\ \nmid\ (\epsilon_i,j)\text{ for} \ (i,j)\in S\text{ such that }\epsilon_i+j(\beta_{r+1}-\beta)\le
			v\left(\partial_{p^e}f(w_{0r})\right)+p^e(\beta_{r+1}-\beta).\label{eq:pdoesnotdividegeneralcase}
		\end{equation}
		This completes the proof.
\end{proof}

\begin{proof}[Proof of Theorem \ref{teoEx}] We will work with valued subfields of the valued field\linebreak$k\left(\left(t^{\frac1{p^2}\Gamma}\right)\right)$ of generalized series over $k$ with exponents in
$\frac1{p^2}\Gamma$, endowed with the $t$-adic valuation.
	
	First, let us show that $K$ is not simply defectless. Let
	\[
	L=K(x)=\frac{K[X]}{\left(X^{p^2}-t^{p+1}-w^p\right)}.
	\]
	We have $[L:K]=p^2$; the extension $(L|K,v)$ can be seen as a tower
	\[
	K\subset K'\subset L
	\]
	of two purely inseparable extensions of degree $p$ with $K'=K(y)=K\left(t^{\frac1p}\right)$ and
	\[
	y=\left(t^{p+1}+w^p\right)^{\frac1p}.
	\] 
	Now, $y':=y-w\in K'$ and $vy'=\frac{p+1}p$, so $\Gamma':=vK'=\Gamma+\frac1p\Z=vK+\frac1p\Z$ and
	$\left[vK':vK\right]=p$. We claim that $(L|K',v)$ is immediate (and hence both it and $(L|K,v)$ are defect extensions). We have $L=\frac{K'[X]}{\left(X^p-y\right)}$.
	
	Take an element $z\in L$. Write $z=f(x)$, where
	\[
	f(X)=\sum\limits_{j=0}^{p-1}b_jX^j\quad\text{ with }\ b_j\in K'.
	\]
	To prove that  $\left(L|K',v\right)$ is immediate, it is sufficient to prove that
	\begin{equation}
		v(f(x))=v\left(f\left(s+t^{\frac{p+1}{p^2}}\right)\right)\in vK'.
	\end{equation}
	
	For $r\in\N_{>0}$, let $s_r=w_r^{\frac1p}$.
	
	Since $z=f(x)\in k\left\langle t^{\frac1p\Gamma},x\right\rangle$, it is an $x$-quasi-finite element of $k\left(\left(t^{\frac1p\Gamma}\right)\right)$ (by the same argument as in Lemma \ref{henselizationquasifinite}). By Remark \ref{quasifinitepowerseries}, applied to $s+t^{\frac{p+1}{p^2}}$ instead of $w$, we have
	\[
	f(s)=t^\gamma g\left(\frac{s_r}{t^\beta}+t^{\frac{p+1}{p^2}-\beta}\right)
	\]
	for some sufficiently large $r$ and suitable  $\gamma\in\frac1p\Gamma$,
	$\gamma_1,\dots,\gamma_n,\beta\in\frac1p\Gamma_{>0}$, and $g\in
	k\left[t^{\gamma_1},\dots,t^{\gamma_n}\right][[S_r]]$. By Proposition \ref{truncation} (1), applied to
	$\frac{s_r}{t^\beta}+t^{\frac{p+1}{p^2}-\beta}$ instead of $w+ct^\beta$, we see that there exists $r'\ge r$ such that
	\begin{equation}
v(z)=v(f(x))=v\left(t^\gamma g\left(\frac{s_{r'}}{t^\beta}+t^{\frac{p+1}{p^2}-\beta}\right)\right)\in\frac1p\Gamma=vK',
	\end{equation}
	as desired. This completes the proof that $L|K'$ and $L|K$ are defect extensions, so $K$ is not simply defectless.
	\medskip
		
	To prove that $K$ is algebraically maximal, it is sufficient to prove that $K$ does not admit immediate extensions of degree $p$, that is, Artin--Schreier extensions or purely inseparable degree $p$ extension. Take an element
	\[
	z\in K\setminus K^p,
	\]
	let $y=z^{\frac1p}$ and consider the extension $L=K\left(z^{\frac1p}\right)=K(y)=\frac{K[Y]}{(Y^p-z)}$. Replacing $z$ by $z-a^p$ with $a$ as in Lemma \ref{subtractppowers}, we may assume that $vz\notin p\Gamma$,  which implies that our extension $L$ is not immediate.
	\medskip
	
	Next, let $L=K(x)$ be an Artin-Schreier extension of $K$:
	\[
	L=K(x)=\frac{K[X]}{(X^p-X-b)}
	\]
for some $b\in K$ with $vb<0$ (we may assume that the coefficient of $X$ is 1 because $K$ is closed under taking $(p-1)$-st roots). Write
\begin{equation}
 b=\sum\limits_{(\epsilon_i,j)\in\N^2}a_{\epsilon_ij}t^{\epsilon_i}\left(\frac{w_r}{t^\beta}\right)^j\label{eq:b=}
\end{equation}
as in Corollary \ref{coryEqualsum} and Remark \ref{bounded->finite}. Define the set $\Delta(b)\subset\Gamma\times\N$ as follows:
	\begin{equation}
\Delta(b)=\left\{(\epsilon_i,j)\in\Gamma\times\N\ \left|\
a_{\epsilon_ij}\ne0,\ v\left(t^{\epsilon_i}\left(\frac{w_r}{t^\beta}\right)^j\right)<0,\ p\,|\,(\epsilon_i,j)\right.\right\}.
	\end{equation}
The set $\Delta(b)$ is finite by Remark \ref{bounded->finite}. Put
$$
n(b) = \max\{n\in \N\mid \text{there exists }(\epsilon_i,j)\in\Delta(b)\text{ with }p^n\ |\ (\epsilon_i,j)\}.
$$
In the case when $\Delta(b)=\emptyset$, we adopt the convention $n(b)=0$. Let us prove that $L$ is not immediate by induction on $n(b)$. If $n(b)=0$ then the leading term $t^{\epsilon_i}\left(\frac{w_r}{t^\beta}\right)^j$ in \eqref{eq:b=} has the property $p\ \nmid(\epsilon_i,j)$. Hence  $p\ \nmid\ vb$, so $[vL:\Gamma]=p$ and $L|K$ is not immediate. 
	
Next, assume that $n(b)>0$. Put
$a:=\sum\limits_{(\epsilon_i,j)\in\Delta(b)}a_{\epsilon_ij}^{\frac1p}t^{\frac{\epsilon_i}p}\left(\frac{w_r}{t^\beta}\right)^{\frac jp}$. Let
$x_1=x-a$, so $K(x)=K(x_1)$. Then $x_1$ satisfies the equation $x_1^p-x_1-b+a^p-a=0$.  We claim that
\begin{equation}
\Delta(b-a^p+a)\subset\frac1p\Delta(b).\label{eq:Deltaoverp}
\end{equation}
Indeed, rewrite \eqref{eq:b=} as
\begin{equation}
b=\sum\limits_{(\epsilon_i,j)\in\Delta(b)}a_{\epsilon_ij}t^{\epsilon_i}\left(\frac{w_r}{t^\beta}\right)^j+
\sum\limits_{(\epsilon_i,j)\in(\Gamma\times\N)\setminus\Delta(b)}a_{\epsilon_ij}t^{\epsilon_i}\left(\frac{w_r}{t^\beta}\right)^j
\end{equation}
Then
\begin{eqnarray}
b-a^p+a=a&+&\sum\limits_{(\epsilon_i,j)\in(\Gamma\times\N)\setminus\Delta(b)}a_{\epsilon_ij}t^{\epsilon_i}\left(\frac{w_r}{t^\beta}\right)^j=\nonumber\\
\sum\limits_{(\epsilon_i,j)\in\Delta(b)}a_{\epsilon_ij}^{\frac1p}t^{\frac{\epsilon_i}p}\left(\frac{w_r}{t^\beta}\right)^{\frac jp}&+&
\sum\limits_{(\epsilon_i,j)\in(\Gamma\times\N)\setminus\Delta(b)}a_{\epsilon_ij}t^{\epsilon_i}\left(\frac{w_r}{t^\beta}\right)^j.\label{eq:b1=}
\end{eqnarray}
Now, if $a_{\epsilon_ij}\ne0$ and $(\epsilon_i,j)\in(\Gamma\times\N)\setminus\Delta(b)$ then
$(\epsilon_i,j)\in(\Gamma\times\N)\setminus\Delta(b-a^p+a)$; no term in the second sum on the right hand side of
\eqref{eq:b1=} contributes to $\Delta(b-a^p+a)$. On the other hand, an element of the form
$\left( \frac{\epsilon_i}p,\frac jp\right)$ with $(\epsilon_i,j)\in\Delta(b)$ (appearing in the first sum  on the right hand side of \eqref{eq:b1=}) may or may not belong to $\Delta(b-a^p+a)$; this proves the inclusion \eqref{eq:Deltaoverp}. From \eqref{eq:Deltaoverp} we obtain $n(b-a^p+a)\le n(b)-1$. Thus the extension $L|K$ is not immediate by the induction assumption. This completes the proof of the theorem.
\end{proof}
\medskip


\vspace{0.5cm}

\noindent{\footnotesize CAIO HENRIQUE SILVA DE SOUZA$^1$ (corresponding author)\\
	Federal University of São Carlos, Department of Mathematics\\
	Rodovia Washington Lu\'is, 235\\
	13565-905, S\~ao Carlos - SP, Brasil.\\
	and \\
	Institut de Mathématiques de Toulouse \\
	118, rte de Narbonne, 31062 Toulouse cedex 9, France\\
	Email: {\tt caiohss@estudante.ufscar.br} \\\\

		\noindent{\footnotesize MARK SPIVAKOVSKY$^2$\\
			CNRS UMR 5219 and Institut de Mathématiques de Toulouse \\
			118, rte de Narbonne, 31062 Toulouse cedex 9, France\\
			and\\
			Instituto de Matemáticas (Unidad Cuernavaca) LaSol, UMI CNRS 2001\\
			Universidad Nacional Autónoma de México\\
			Av. Universidad s/n. Col. Lomas de Chamilpa\\
			Código Postal 62210, Cuernavaca, Morelos, México.\\
			Email: {\tt spivakovsky@math.univ-toulouse.fr} \\\\

\end{document}